\author{Thomas Haettel, Dawid Kielak, Petra Schwer}
\title{The $6$-strand braid group is CAT(0)}
\def\imod#1{\allowbreak\mkern10mu({\operator@font mod}\,\,#1)}
\numberwithin{figure}{section}
\theoremstyle{plain}
\newtheorem{thm}{Theorem}[section]
\newtheorem*{prop*}{Proposition}
\newtheorem*{thm*}{Theorem}
\newtheorem*{thmA}{Theorem \ref{thm: NCP6 is CAT(1)}}
\newtheorem*{thmB}{Theorem \ref{thm:modular are cat0}}
\newtheorem*{corD}{Corollary \ref{cor: braid groups are CAT0}}
\newtheorem{prop}[thm]{Proposition}
\newtheorem{lem}[thm]{Lemma}
\newtheorem{cor}[thm]{Corollary}
\theoremstyle{definition}
\newtheorem{ex}[thm]{Example}
\newtheorem{dfn}[thm]{Definition}
\newtheorem*{dfn*}{Definition}
\newtheorem{notation}[thm]{Notation}
\theoremstyle{remark}
\newtheorem{rmk}[thm]{Remark}
\def\C{\mathbb{C}}
\def\R{\mathbb{R}}
\def\Z{\mathbb{Z}}
\def\F{\mathbb{F}}
\def\iff{if and only if }
\def\rk{\mathrm{rk}}
\def\crk{\mathrm{crk}}
\def\ra{\rightarrow}
\def\bs{\backslash}
\def\llb{\llbracket}
\def\rrb{\rrbracket}
\def\lk{\mathrm{lk}}
\newcounter{dawidcomments}
\newcounter{thomascomments}
\newcounter{petracomments}
\begin{document}
\textsc{\begin{LARGE}\begin{center} The $6$-strand braid group is CAT(0) \end{center}\end{LARGE}}

\medskip

\begin{center}
Thomas Haettel\footnotemark, Dawid Kielak\footnotemark, Petra Schwer\footnotemark \newline
\today
\end{center}

\footnotetext[1]{Institut Montpelliérain Alexander Grothendieck, Universit\'e de Montpellier, Place Eug\`{e}ne Bataillon, 34095 Montpellier Cedex 5, France, thomas.haettel@univ-montp2.fr}
\footnotetext[2]{Mathematisches Institut, Universit\"{a}t Bonn, Endenicher Allee 60, 53115 Bonn, Deutschland, kielak@math.uni-bonn.de, author supported by the Foundation for Polish Science and ERC grant nb. 10160104}
\footnotetext[3]{Department of Mathematics, Karlsruhe Institute of Technology, Englerstrasse 2, 76133 Kalrsruhe, petra.schwer@kit.edu, author supported by the Foundation for Polish Science and DFG grant SCHW 1550-01}

\medskip

\begin{center}
\begin{minipage}{0.8\textwidth}
\textsc{Abstract.} We show that braid groups with at most $6$ strands are CAT(0) using the close connection between these groups, the associated non-crossing partition complexes,  and the embeddability of their diagonal links into spherical buildings of type~A. Furthermore, we prove that the orthoscheme complex of any bounded graded modular complemented lattice is CAT(0),  giving a partial answer to a conjecture of Brady and McCammond.
\end{minipage}
\end{center}

\bigskip

\section{Introduction}

A discrete group is called CAT(0) if it acts properly discontinuously and cocompactly by isometries on a CAT(0) space. The property of being CAT(0) has far reaching consequences for a group. Algorithmically, such groups have quadratic Dehn functions and hence soluble world problem; geometrically, all free-abelian subgroups are undistorted; algebraically, the centralisers of infinite cyclic subgroups split.

In \cite{charney} Charney asked whether all braid groups are CAT(0). In this paper we give a positive answer for braid groups with at most 6 strands.

Brady and McCammond showed in \cite{brady_mccammond} that the $n$-strand braid groups are CAT(0) if $n=4$ or $5$. However, their proof for $n=5$ relies heavily on a computer program. They also conjectured that the same statement should hold for arbitrary $n$ \cite[Conjecture 8.4]{brady_mccammond}.

This paper exploits the close relationship between braid groups, non-crossing partitions of a regular $n$-gon, and the geometry of spherical buildings; the latter relationship was discovered by Brady and McCammond~\cite{brady_mccammond}.

More specifically, we look at the orthoscheme complex (a certain metric polyhedral complex)  associated to $NCP_n$, the lattice of non-crossing partitions, whose geometry was studied in \cite{brady_mccammond}. Brady and McCammond showed that the CAT(0) property for braid groups can be deduced from the fact that the orthoscheme complex of the non-crossing partition lattice $NCP_n$ is a CAT(0) space. This can be done by inspecting the diagonal link of the orthoscheme complex of $NCP_n$ and proving that this diagonal link is CAT(1).

The diagonal link of the orthoscheme complex of the lattice $NCP_n$ can be embedded into a spherical building of type $A_{n-2}$. Our approach is based on investigating the relationship between the geometry of the diagonal link of $NCP_n$ and the ambient building. Following the criterion of Gromov (see~\cite{gromov}), made precise by Bowditch (see~\cite{bowditch}) and Charney--Davis (see~\cite{charney_davis_singular}), the result will be implied by two facts: it is enough to show that the diagonal link is locally CAT(1) and that it does not contain any unshrinkable locally geodesic loop of length smaller than $2\pi$. We follow this strategy in the proof of the following theorem.

\begin{thmA}
For every $n \leq 6$  the diagonal link in the orthoscheme complex of non-crossing partitions $NCP_n$ is CAT(1).
\end{thmA}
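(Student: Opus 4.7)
The plan is to apply the Gromov--Bowditch--Charney--Davis criterion recalled in the introduction: it suffices to show that the diagonal link $L_n$ of the orthoscheme complex of $NCP_n$ is locally CAT(1) and contains no locally geodesic loop of length less than $2\pi$. I would proceed by induction on $n$, with the cases $n\leq 3$ being essentially trivial ($L_2$ is empty, and $L_3$ is a finite set of points).

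For the local condition, I would observe that a simplex of $L_n$ corresponds to a chain in $NCP_n$, so its link in $L_n$ decomposes as a spherical join of diagonal links of non-crossing partition lattices on strictly smaller ground sets, one factor per gap in the chain. By induction each factor is CAT(1), and since spherical joins of CAT(1) spaces are again CAT(1), the local condition holds. This reduces the theorem to the global condition of bounding the systole of $L_n$ from below by $2\pi$.

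For the second condition, I would exploit the embedding of $L_n$ into the spherical building $\Delta_{n-2}$ of type $A_{n-2}$, which is itself CAT(1). If a locally geodesic loop in $L_n$ happened to remain locally geodesic in $\Delta_{n-2}$, it would automatically have length at least $2\pi$. The real issue is that $L_n$ is not convex in the ambient building, so a locally geodesic loop in $L_n$ may turn at points where the ambient geodesic would leave $L_n$. I would analyze such turning points using the inductive CAT(1) structure of the link at each vertex, show that the bending angles forced by the non-crossing condition are controlled from below, and then compare the developed image of the loop with genuine geodesics in apartments of $\Delta_{n-2}$ to obtain the required lower bound on length.

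The main obstacle is exactly this last comparison. Whereas for small $n$ the combinatorial possibilities are few and can be inspected by hand, at $n=6$ one has a genuinely rich collection of configurations of non-crossing partitions through which a hypothetical short loop might pass, and it is precisely the absence of a sharp argument here that forced Brady--McCammond to computer-assist their $n=5$ proof. I would expect the bulk of the work to lie in classifying such configurations (using the dihedral symmetry of the hexagon to cut down the cases), and ruling each one out by a geometric argument carried out inside an appropriate apartment of $\Delta_{n-2}$.
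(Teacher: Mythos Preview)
Your setup is sound and matches the paper: the local CAT(1) condition does follow by induction exactly as you describe (this is essentially Brady--McCammond's Lemma~5.8/Theorem~5.10), and the embedding into the spherical building together with the notion of turning points is the right framework. Where your proposal diverges, and where the real gap lies, is in the final step.

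You propose to control the length of a hypothetical short loop by bounding the bending angles at turning points and comparing a ``developed image'' with apartment geodesics. The paper does not attempt any such direct length estimate, and it is not clear one could be made to work: the loop does not develop into a single apartment, there is no a priori upper bound on the number of turning points that would let a per-turn angle estimate sum to something useful, and you yourself flag this step as the main obstacle without offering a mechanism.

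What the paper does instead is a homotopy argument based on Bowditch's criterion that CAT(1) is equivalent to every short loop being \emph{shrinkable}. It isolates a class of \emph{universal} points in $X$ --- those lying in faces all of whose vertices are partitions with a single nontrivial block of consecutive elements --- and proves that $X$ is $\pi$-star-shaped at every universal point: the building geodesic from such a point to any non-opposite point of $X$ stays inside $X$. This immediately forces any short loop through a universal point to be shrinkable. For $n=5$ a brief case check shows every turning face is a universal vertex, finishing that case. For $n=6$ this fails, so the paper classifies (up to dihedral symmetry, as you anticipated) the finitely many non-universal turning faces, and for each one produces a pair $v,w$ of opposite universal vertices such that both the turning point $x$ and its antipode $y=l(L/2)$ on the loop lie in apartments of $X$ containing $v,w$. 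One of $v,w$ then yields a detour path from $x$ to $y$ of length at most $\pi$, splitting $l$ into two short loops each passing through a universal vertex and hence shrinkable; Bowditch's three-path theorem then forces $l$ itself to be shrinkable, a contradiction.

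So the missing ingredient is not an angle bound but the star-shapedness at universal points combined with Bowditch's shortcut lemma; the case analysis you envision is there, but it feeds into this shrinkability machine rather than into a length comparison.
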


As a consequence we obtain
\begin{corD}
For every $n \leq 6$, the $n$-strand braid group is CAT(0).
\end{corD}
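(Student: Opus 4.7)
The plan is to deduce the corollary directly from Theorem A via the reduction established by Brady and McCammond in~\cite{brady_mccammond}. As recorded in the introduction, their work supplies the two implications needed to pass from a CAT(1) statement about the diagonal link of $NCP_n$ to a CAT(0) statement about the $n$-strand braid group $B_n$: first, the CAT(0) property of the orthoscheme complex of $NCP_n$ is equivalent to the CAT(1) property of its diagonal link; second, the CAT(0) property of $B_n$ follows from the CAT(0) property of that orthoscheme complex.

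Concretely, I would proceed in two short steps. First, apply Theorem A to conclude that for $n \leq 6$ the diagonal link of the orthoscheme complex of $NCP_n$ is CAT(1). Second, invoke the reduction of~\cite{brady_mccammond}: since the orthoscheme complex is a bounded piecewise Euclidean simplicial complex, Gromov's link condition combined with the Cartan-Hadamard theorem allows one to upgrade the CAT(1) property of the links (which, by a rescaling and symmetry argument, reduces to the single diagonal link through $\hat{0}$ and $\hat{1}$) to the global CAT(0) property of the complex. The Birman-Ko-Lee dual Garside presentation then yields a compact classifying space for $B_n$ whose universal cover is assembled equivariantly from copies of the orthoscheme complex of $NCP_n$ glued along convex subcomplexes; since the pieces are CAT(0), the universal cover is CAT(0). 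As $B_n$ acts properly and cocompactly on this cover, it is a CAT(0) group.

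The only genuinely new input in this chain of reasoning is Theorem A, and the main obstacle of the entire argument has therefore already been overcome in proving that theorem; what remains is a recapitulation of~\cite{brady_mccammond}. The only care required is to check that the hypotheses of Brady-McCammond's reductions (boundedness of the orthoscheme complex, the symmetry argument reducing general links to the diagonal link, and the compatibility of the gluings in the dual braid complex with CAT(0) geometry) are satisfied uniformly in the range $n \leq 6$, which is immediate from the standing assumption and does not interact with the specific small-$n$ features exploited in Theorem A.
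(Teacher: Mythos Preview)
Your proposal is correct and follows essentially the same route as the paper: the paper simply invokes Proposition~\ref{prop: cat0 implies braids} (the Brady--McCammond reduction) after having established the CAT(1) property of the diagonal links for $n\leq 6$, and you are just unpacking what that proposition contains. One minor sharpening: the reduction does not literally say that CAT(0) for the orthoscheme complex of $NCP_n$ is equivalent to CAT(1) for \emph{its} diagonal link alone; rather, as in Theorem~\ref{thm: brady mccammond large}, one needs the diagonal links of $NCP_k$ for \emph{all} $k\leq n$ (since general links in the orthoscheme complex decompose as joins of such), but Theorem~A already gives this for every $k\leq 6$, so your argument goes through unchanged.
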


We are thus giving a new proof of the theorem in case $n=4$ or $5$, and provide more evidence (with the newly covered case $n=6$) towards~\cite[Conjecture 8.4]{brady_mccammond}. Note that our proof at no point relies on computer-assisted calculations; it is geometric in flavour.

\smallskip

Brady and McCammond conjectured further that the orthoscheme complex of any bounded graded modular lattice is CAT(0) \cite[Conjecture~6.10]{brady_mccammond}. We are able to give a partial result towards the solution of this problem.

\begin{thmB}
The orthoscheme complex of any bounded graded modular complemented lattice is CAT(0).
\end{thmB}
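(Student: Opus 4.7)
The plan is to combine classical structure theory of modular complemented lattices with the CAT(1) property of spherical buildings of type $A$. First, observe that the orthoscheme complex of a bounded lattice is contractible: it is a simplicial cone with apex $\hat{0}$ over the subcomplex of chains lying strictly above $\hat{0}$, hence simply connected. By the Gromov--Bowditch characterization of CAT(0) cited in the introduction, it then suffices to show that every link in the complex is CAT(1).

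Second, I would set up an induction on the rank of $L$. The link of a vertex $v$ in the orthoscheme complex decomposes as a spherical join of pieces coming from the intervals $[\hat{0}, v]$ and $[v, \hat{1}]$; links at higher-dimensional simplices decompose similarly along subchains. Each such interval is itself a bounded graded modular complemented lattice, since intervals of a modular lattice are modular, and the modular law produces a complement of $x$ in $[a,b]$ from a complement of $x$ in $L$ by intersection and join with $a$ and $b$. Since spherical joins of CAT(1) spaces are CAT(1), the link problem reduces, by induction on rank, to showing that the diagonal link of $L$ itself is CAT(1).

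Third, for the diagonal link I would use classical coordinatization. A bounded graded modular complemented lattice decomposes as a direct product of irreducible factors; by the von Neumann--Birkhoff coordinatization theorem, each irreducible factor of rank $\geq 4$ is isomorphic to the lattice of subspaces of a vector space $R^n$ over some division ring $R$. For such a subspace lattice the orthoscheme complex is isometric to the Euclidean cone on the spherical building of type $A_{n-1}$, which is CAT(1) by Davis; since Euclidean cones on CAT(1) spaces are CAT(0), the diagonal link is CAT(1) here. Rank-$3$ irreducibles correspond to projective planes, possibly non-Desarguesian, whose diagonal link is the point-line incidence graph with edges of length $\pi/3$; by the projective plane axioms such incidence graphs have girth at least $6$, so all cycles have length at least $2\pi$ and the graph is CAT(1). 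Ranks $\leq 2$ are trivial. For a product $L_1 \times L_2$ the orthoscheme complex is isometric to the Euclidean product of the orthoscheme complexes of the factors, so CAT(0) is inherited.

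The main obstacle I anticipate is verifying at the level of metrics, not merely combinatorially, the identification of the orthoscheme complex of a subspace lattice with the Euclidean cone on the corresponding type-$A$ spherical building, i.e.\ matching the orthoscheme Euclidean metric on each top-dimensional simplex with the cone metric on the associated Weyl chamber. A secondary subtlety is checking compatibility of the orthoscheme construction with direct products of lattices as metric spaces, so that interleavings of maximal chains from $L_1$ and $L_2$ tile the Euclidean product by orthoschemes in the expected way.
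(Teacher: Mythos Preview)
Your proposal is correct and follows a genuinely different route from the paper. Both arguments reduce, via the Brady--McCammond link criterion and the product decomposition of the orthoscheme complex, to showing that the diagonal link of each irreducible factor is CAT(1), and both handle rank-$3$ factors identically through the girth-$6$ incidence graph. The divergence is in rank $\geq 4$: you invoke the Birkhoff/von~Neumann coordinatization theorem to conclude that each irreducible factor \emph{is} a full subspace lattice $S(V)$, so its diagonal link \emph{is} the spherical building and CAT(1) is immediate. The paper instead cites Frink's theorem, obtaining only a join- and meet-preserving \emph{embedding} $P_i \hookrightarrow S(V)$; it then observes that, since joins and meets in $P_i$ agree with those in $S(V)$, no pair of elements of $P_i$ fails modularity in the sense of Definition~\ref{dfn: failing}, and applies the turning-point machinery (Lemmas~\ref{lem: cardinality of T} and~\ref{lem: modular do not turn}) to rule out short locally geodesic loops in the diagonal link. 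Your approach is more self-contained and bypasses that machinery entirely; the paper's approach, on the other hand, reuses tools already developed for the $NCP_n$ case and applies verbatim to any sublattice of $S(V)$ closed under joins and meets, not only the full one. The two metric identifications you flag as obstacles (the orthoscheme metric on $|S(V)|$ matching the cone on the building, and compatibility of the orthoscheme construction with direct products) are exactly the facts the paper also uses without proof, deferring to~\cite{brady_mccammond}.
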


\noindent \textbf{Outline of proof.} To prove our main result Theorem~\ref{thm: NCP6 is CAT(1)} we first embed the diagonal link of the orthoscheme complex of non-crossing partitions $NCP_n$ into a spherical building. Then we assume (for a contradiction) that the diagonal link contains an unshrinkable (and hence locally geodesic) short loop. The image of such a loop $l$ contains a positive finite number of points of special interest (called turning points) which characterise the positions at which the loop fails to be locally geodesic in the ambient space.

By inspection we show that there is a short path $p$ between any turning point of $l$ and the point opposite to it in $l$, such that $p$ passes through a special type of a vertex, called universal. We show that any short loop passing through a universal vertex is shrinkable, and thus the
two new loops obtained by following half of $l$ and then $p$ are short and shrinkable.
Then a result of Bowditch~\cite{bowditch} concludes the argument.

\smallskip
\noindent \textbf{Other Artin groups.}
In fact Charney~\cite{charney}
stated a more general question about the curvature of Artin groups, and suggested that all of them should be CAT(0). Several partial answers to this question are known. Brady and McCammond studied new presentations for certain three-generator Artin groups \cite{brady_mccammond2} and showed that the associated presentation 2-complex admits a metric of non-positive curvature.

Charney and Davis~\cite{charney_davis} introduced the Salvetti complex, a piecewise Euclidean cube complex, associated to an Artin group. They showed that its universal cover, on which the Artin group acts geometrically, is CAT(0) if and only if the Artin group is right-angled (i.e. the exponents appearing in the presentations are either equal to 2 or $\infty$).

Brady~\cite{brady} studied a class of Artin groups with three generators and constructed certain complexes using the associated Coxeter groups. He showed that these complexes carry a piecewise Euclidean metric of non-positive curvature and have as fundamental group the Artin groups under consideration. A generalisation of this was proved by Bell~\cite{bell}.

Explicit examples of Artin groups with two-dimensional Eilenberg-McLane spaces which act geometrically on 3-dimensional CAT(0) complexes (but not so on 2-dimensional ones) were constructed by Brady and Crisp~\cite{brady_crisp}.

\smallskip
\noindent \textbf{Acknowledgements.} We would like to thank Piotr Przytycki for introducing us to the problem, and all the help provided during our work. We would also like to thank Brian Bowditch who helped us access his `Notes on locally CAT(1) spaces'~\cite{bowditch}. Finally, we would like to thank Ursula Hamenst\"{a}dt for useful conversations, and the referee, for pointing out numerous ways of improving the presentation of this paper.

\section{Definitions and preliminaries}\label{sec: definitions}

We use this section to collect definitions of the main objects of the paper as well as their most important properties. References are given for further reading as well as for all properties listed.

\subsection{Posets and lattices}
This first subsection is used to introduce partially ordered sets (posets) and the geometric realization considered in this paper.

\begin{dfn}[Intervals of integers]
We will use $\llb n,m \rrb$ to denote the interval in $\Z$ between $n$ and $m$ (with $n \leqslant m$), that is
\[ \llb n,m \rrb = [n,m] \cap \Z.\]
\end{dfn}

\begin{dfn}[Posets]
A \emph{poset} $P$ is a set with a partial order. $P$ is \emph{bounded} if it has a (unique) minimum, denoted by $0$, and a (unique) maximum, denoted by $1$. That is for every $x \in P$ we have
\[0 \leq x \leq 1 \]
\end{dfn}

\begin{dfn}[Subposets]
\label{dfn: subposet}
Let $P \subseteq Q$ be two posets. We say that $P$ is a \emph{subposet} of $Q$ \iff the order on $P$ is induced by the order on $Q$.
\end{dfn}

\begin{dfn}[Rank]
A bounded poset $P$ has \emph{rank} $n$ \iff every chain is contained in a maximal chain with $n+1$ elements. For $x \leq y$ in $P$, the \emph{interval} between $x$ and $y$ is the subposet $P_{xy}=\{z \in P \,:\, x \leq z \leq y\}$. If every interval in $P$ has a rank, $P$ is \emph{graded}. If $x$ is an element in a bounded graded poset $P$, then the \emph{rank} of $x$ is the rank of the interval $P_{0x}$.
\end{dfn}

\begin{dfn}[Joins, meets, lattices]
A poset $P$ is called a \emph{lattice} \iff for every $x,y \in P$ the following two conditions are satisfied:
\begin{itemize}
 \item there exists a unique minimal element $x \vee y$ of the set
  \[\{z \in P \mid x \leq z \mbox{ and } y \leq z\},\] called the \emph{join} of $x$ and $y$, and
 \item there exists a unique maximal element $x \wedge y$ of the set \[\{z \in P \mid x \geq z \mbox{ and } y \geq z\},\] called the \emph{meet} of $x$ and $y$.
\end{itemize}
\end{dfn}

\begin{dfn}[Linear lattice]
\label{dfn: linear lattice}
If $V$ is an $n$-dimensional vector space over a division algebra, we will denote by $S(V)$ the rank $n$ lattice consisting of all vector subspaces of $V$, with the order given by inclusion. We call $S(V)$ the \emph{linear lattice} of $V$.
\end{dfn}

It is easy to see that $S(V)$ is indeed a lattice, where the meet of two linear subspaces can be taken to be their intersection and the join is given by their common span.

\begin{dfn}[Failing modularity]
\label{dfn: failing}
Let $P$ be a subposet of a linear lattice $S(V)$. We say that two elements $x,y \in P$ \emph{fail modularity} (with respect to $P$) \iff their join or their meet in $S(V)$ is not contained in $P$.

\end{dfn}

When it is clear from the context in which pair $P \subseteq S(V)$ we are working we sometimes just say $x$ and $y$ fail modularity.

\begin{dfn}[Realisations]
Let $P$ be a graded poset. The \emph{simplicial realisation} $\|P\|$ of $P$ is the simplicial complex whose vertex set is $P$, and whose $k$-simplices correspond to chains $x_0 < x_1 < \ldots < x_k$ of length $k$.

A \emph{geometric realisation} of $P$ is a metric space $X$ together with a homeomorphism $X \to \| P \| $.
\end{dfn}
Note that in particular $\| P \| $ endowed with the standard piecewise-Euclidean metric is a geometric realisation of $P$. This is however \emph{not} the metric we will study in this paper; the metric of our interest will be defined in Definition~\ref{def:orthoscheme}.

Observe that for a bounded poset $P$ the edge connecting $0$ to $1$ is contained in every maximal simplex.

\begin{notation}[Geometric realisation]
Given a (fixed) geometric realisation of $P$, we can (and will) treat $X$ as a simplicial complex via the given homeomorphism $X \to \| P \|$.  Thus we will continue to use the simplicial complex vocabulary when talking about $X$ and we will write $\vert P\vert$ to denote $X$ together with its simplicial structure inherited from $\| P\|$.
\end{notation}

We will also use some standard buildings terminology in the setting of simplicial complexes:
\begin{notation}
In a simplicial complex a \emph{vertex} is a  0-simplex, \emph{faces} are simplices and \emph{chambers} are maximal simplices.
\end{notation}

As we will never look at more than one geometric realisation of any poset, we sometimes abuse notation by using $P$ to denote both a poset and some fixed geometric realisation thereof.

\begin{dfn}[Adjacency]
Given a poset $P$, an element $y \in P$ is said to be \emph{adjacent} to a chain $x_0 < \dots < x_k$ in $P$ \iff $y$ does not belong to the chain, and there exists a chain containing both $y$ and all the elements $x_i$.

In the setting of the simplicial realisation $\| P \|$, a vertex $y$ is adjacent to a face $F = \{x_0, \dots, x_k\}$ \iff $y$ does not belong to $F$, and $F \cup \{y\}$ is itself a face. Equivalently, $y$ is adjacent to $F$ \iff for each vertex $x_i \in F$ there is an edge connecting $x_i$ to $y$.
\end{dfn}

\begin{dfn}[Diagonal link]
Given a geometric realisation $\vert P \vert $ of a bounded lattice $P$, we define the \emph{diagonal link} of  $\vert P \vert $ to be the link
\[ LK(e_{01}, \vert P \vert) \]
of the \emph{diagonal edge} $e_{01}$, that is the edge connecting the minimum 0 to the maximum 1.
\end{dfn}

Note that if $|P|$ has a piecewise Euclidean or spherical metric, then $LK(e_{01},\vert P \vert)$ carries a natural angular metric and is hence itself a geometric realisation of the poset $P \setminus \{0,1\}$.

\begin{rmk}\label{rem: labels in link}
A fact we will frequently use is that the vertices of $LK(e_{01},\vert P \vert)$ are in a natural way in one to one correspondence with elements of $P\setminus \{0,1\}$ as follows. A vertex, i.e. 0-simplex in $LK(e_{01},\vert P \vert)$ corresponds to a 2-simplex in $\vert P\vert$ whose vertices are $0,1$ and one additional vertex $p\in P$. We may thus label the corresponding 0-simplex in the link by $p$. So when we refer to a vertex $p$ in $LK(e_{01},\vert P \vert)$ what we mean is the 0-simplex in the link which corresponds to the 2-simplex in $\vert P\vert$ spanned by $p$ and $0,1$.
\end{rmk}

\subsection{Spherical buildings and orthoscheme complexes}

First we will very quickly introduce spherical buildings and some of their basic properties. In the rest of this section we focus on the spherical buildings of type $A_n$, which arise from the lattice of linear subspaces of a vector space. In the rest of the paper we will use the standard terminology of spherical buildings freely and refer the reader to the book by Abramenko and Brown~\cite{abramenko_brown} for further details.

\begin{dfn}
A (spherical) \emph{building} is a simplicial complex $B$ which is the union of a collection of subcomplexes $A$, called \emph{apartments}, satisfying the following axioms:
\begin{itemize}
\item[(B0)] Each apartment is isomorphic to a (finite) Coxeter complex.
\item[(B1)] For any two simplices $c,d$ in $B$ there exists an apartment containing both.
\item[(B2)] If $A_1$ and $A_2$ are two apartments containing simplices $c$ and $d$, then there exists an isomorphism $A_1\to A_2$ fixing $c$ and $d$ pointwise.
 \end{itemize}
 The maximal simplices in $B$ are called \emph{chambers}.
\end{dfn}

Note that $c,d$ are allowed to be empty in axiom (B2) and hence any two apartments are isomorphic. We call the type of the Coxeter group the \emph{type} of the building. Note further that $B$ is a chamber complex, that is any two maximal simplices have the same dimension.

For any spherical building $B$ there is a standard geometric realisation of $B$ which induces on each apartment the round metric of a sphere. Throughout the paper we will consider a spherical building $B$ simultaneously as a simplicial complex and a metric space using this standard geometric realisation.

\begin{dfn}
Two points $x,y$ in a spherical building $X$ are called \emph{opposite} if for some (any) apartment $A$ containing $x$ and $y$, $x$ and $y$ are opposite in the apartment $A$, seen as a round sphere. Equivalently, the distance between $x$ and $y$ in $X$ is $\pi$. Two faces $F$,$F'$ in $X$ are called \emph{opposite} if for some (any) apartment $A$ containing $F$ and $F'$, $F$ and $F'$ are opposite in the apartment $A$.
\end{dfn}

\begin{prop}\label{prop:buildings}
 Let $B$ be a spherical building. Then
\begin{enumerate}
\item\label{one} the link $\lk_B(c)$ of any simplex $c$ in $B$ is a spherical building.
\item\label{two} apartments are metrically convex, in other words for any apartment $A$ containing a pair of points $x,y\in B$ one has $d_B(x,y)=d_A(x,y)$.
\item\label{three} $B$ is a CAT(1) space when equipped with the standard metric.
\end{enumerate}
\end{prop}
\begin{proof}
For a proof of item~\ref{one} see Proposition 3 on page 79 in the book of Brown~\cite{brown}. Proofs of the other items are contained in~the book of Bridson--Haefliger~\cite{bridson_haefliger}. Item~\ref{two} follows from Lemma II.10A.5  and item~\ref{three} is Theorem II.10A.4 therein.\end{proof}

We are interested in one particular type of buildings namely the spherical buildings of type $A_n$.
To see what they are recall that if $V$ is an $n$-dimensional vector space over a division algebra, the linear lattice $S(V)$ of $V$ is the rank $n$ lattice consisting of all vector subspaces of $V$, with the order given by inclusion.
One can equip the simplicial realization of a building with a so called orthoscheme metric that will allow us to explicitly describe the standard CAT(1) metric on buildings of type $A$.

\begin{dfn}[Orthoscheme complex] \label{def:orthoscheme}
Let $P$ be a bounded graded poset. A maximal chain $x_0 < \dots < x_n$  in $P$ corresponds to an $n$-simplex $F$ in the simplicial realisation $\| P \|$ of $P$. We endow this simplex with a metric in the following way: we identify each $x_i$ with the vertex $(1,\ldots,1,0,\ldots,0)$ ($i$ times ``1'') in $\R^n$. We give $F$ the metric of the Euclidean convex hull of the vertices in the cube. Equivalently, it is the metric on simplices of the barycentric subdivision of the Euclidean $n$-cube with side length $2$.

Note that the distance between two vertices lying in a common simplex depends only on the difference in their rank. We can endow each maximal simplex (i.e. chamber) in $\| P \|$ with this metric in a coherent way. The induced length metric on the whole complex is the \emph{orthoscheme metric}. This way $\| P \|$ becomes a geometric realisation of $P$, which is called the \emph{orthoscheme complex} of $P$.
\end{dfn}

For more information about the orthoscheme complexes we refer the reader to Brady and McCammonds exposition in ~\cite{brady_mccammond}. Brady--McCammond~\cite{brady_mccammond} show the following.

\begin{prop}\label{prop:building type A}
When $V$ is an $n$-dimensional vector space over a division algebra, then the diagonal link $LK(e_{01}, \vert S(V) \vert)$ of the orthoscheme complex $\vert S(V) \vert$ is equal to the (standard CAT(1) realisation of the) spherical building associated to $\mathrm{PGL}(V)$, which is a spherical building of type $A_{n-1}$. The dimension of this building is $n-2$.
\end{prop}

The above proposition is crucial for the proof of our main result. It is precisely the geometry of this building and (a specific class of) its subcomplexes that we will focus on.

One can show that apartments of $B$, the building associated to $\mathrm{PGL}(V)$, are in one-to one correspondence with bases of $V$.
\smallskip

Note that, according to Remark~\ref{rem: labels in link}, the vertex set of $LK(e_{01}, \vert S(V) \vert)$ has the structure of a bounded graded lattice if only we add to it the minimum 0 (corresponding to the trivial subspace) and the maximum 1 (corresponding to the improper subspace).
Because of this deficiency let us use the following convention.

\begin{dfn}
We say that a subset $M$ of the vertex set of the diagonal link $LK(e_{01}, \vert S(V) \vert)$ is \emph{stable under joins and meets} \iff  for every $x,y \in M$ we have $x \vee y \in M \cup \{1\}$ and $x \wedge y \in M \cup \{ 0 \}$.
\end{dfn}

\begin{dfn}
Let $M \subseteq B$ be a subset of a building. The \emph{simplicial convex hull} of $M$ is defined to be the intersection of all apartments of $B$ containing $M$.
\end{dfn}

\begin{lem}
\label{lem:geodesic in building}
Let $V$ be an $n$-dimensional vector space over a division algebra, and write $B=LK(e_{01},\vert S(V)\vert)$ for the diagonal link of the orthoscheme complex $\vert S(V)\vert$ of $S(V)$ (and hence a spherical building of type $A_{n-1}$). Let $F$ and $F'$ be two faces in $B$. Consider the minimal set $M$ of vertices of $B$ containing the vertices of $F$ and $F'$ which is stable under joins and meets. Then the full subcomplex spanned by $M$ is equal to the simplicial convex hull of $F \cup F'$.
\end{lem}
\begin{proof}
Let $H$ denote the simplicial convex hull of $F \cup F'$ in $B$.
Let $A$ be an apartment in $B$ containing $F \cup F'$, and let $\{e_1,\ldots,e_{n}\}$ be a basis of $V$ corresponding to $A$. As every element of $M$ arises as a joins or meet of vertices of $F$ and $F'$ every element of $M$ is spanned by some elements of $\{e_1,\ldots,e_{n}\}$, so it belongs to the apartment $A$. Since this is true for every apartment containing $F \cup F'$ it holds for their intersections and hence we have proved that $M \subseteq H$.

\smallskip

To show the converse let $F$ and $F'$ be two simplices and suppose there exists some  $v \in H \smallsetminus M$.
We will show that there exists an apartment containing $M$ but not $v$.

Let $A$ be an apartment containing $M$ and let $\{e_1,\ldots,e_{n}\}$ be a basis of $V$ corresponding to $A$.
Suppose that for each $i$ with $\mathrm{e_i} \leqslant v$ there exists $m_i \in M$ with $e_i \leqslant m_i \leqslant v$. Then we have
\[
 v = \bigvee_I e_i \leqslant \bigvee_I m_i \leqslant v
\]
where $I$ denotes the set of $i$ such that $e_i \leqslant v$. But then $v$ is a join of elements in $M$, and thus is itself in $M$ as $M$ is closed under taking joins.
We conclude that
there exists $i \in \llb 1,n \rrb$ such that
$e_i \leqslant v$, and
\[\forall m \in M, e_i \leqslant m \Longrightarrow m \not\leqslant v.\]

Consider $m_0 = \bigwedge \{m \in M \mid e_i \leqslant m\} \in M$, where $m_0 = V$ if there is no $m \in M$ such that $e_i \leqslant m$. Since $e_i \leqslant m_0$, we know that $m_0 \not\leqslant v$, so there exists $j \in \llb 1,n \rrb \bs \{i\}$ such that $e_j \leqslant m_0$ and $e_j \not\leqslant v$. Then the apartment $A'$ corresponding to the basis $\{e_1,\ldots,e_{i-1},e_i+e_j,e_{i+1},\ldots,e_{n}\}$ contains $M$ but not $v$. But then $v\notin H$, which is a contradiction.  So we have proved that $H \subseteq M$.
\end{proof}

\subsection{Non-crossing partitions}

Let us now introduce the pivotal objects in this article, non-crossing partitions. We will see that they form a sublattice of the linear subspace lattice of a vector space.

\begin{dfn}[Partition lattice]
Let $U_n$ be the set of $n^{th}$ roots of unity inside the plane $\C$. A \emph{partition} of $U_n$ is a decomposition of the set $U_n$ into disjoint subsets, called \emph{blocks}, such that their union is $U_n$.
Let $P_n$ denote the set of all partitions of the set $U_n$. The set $P_n$ forms a bounded graded lattice of rank $n-1$, where the order is given by: $p \leq p'$ if and only if every block of $p$ is contained in a block of $p'$.
\end{dfn}

\begin{dfn}[Non-crossing partition lattice]
\label{dfn: ncp type A}
\label{dfn: ncp}
A partition of $U_n$ is called \emph{non-crossing} if for every distinct blocks $x,y$ of the partition, the convex hulls $Hull(x)$ and $Hull(y)$ in $\C$ do not intersect. We define $NCP_n$ to be the subposet of $P_n$ consisting of non-crossing partitions of $U_n$. Then $NCP_n$ is a bounded graded lattice of rank $n-1$.
\end{dfn}

\begin{lem}[NCP is a subposet of $S(V)$]
\label{lem: NCPs are linear}
For every $n \geq 2$, $P_n$ and $NCP_n$ are isomorphic to subposets of $S(V)$, where $V$ is an $(n-1)$-dimensional vector space.
\begin{proof}
Fix a field $\F$, and let $V=\{(y_i) \in \F^n \mid \sum_{i=1}^n y_i =0\}$. Then $V$ is an $(n-1)$-dimensional $\F$-vector space. Identify $U_n$ with $\llb 1,n \rrb$.
If $x \in P_n$ let \[f(x) =  \big\{(y_i) \in V \mid \forall \text{ block } Q \in x : \sum_{i \in Q} y_i=0\big\}.\] Then $f$ is an injective rank-preserving poset map from $P_n$ to $S(V)$. It clearly restricts to $NCP_n \subseteq P_n$.
\end{proof}
\end{lem}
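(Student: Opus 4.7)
The plan is to exhibit an explicit embedding in coordinates. Fix a field $\F$ and work with the hyperplane
\[ V = \Big\{(y_1, \ldots, y_n) \in \F^n \;\Big|\; \sum_{i=1}^n y_i = 0 \Big\}, \]
which has dimension $n-1$. After identifying $U_n$ with $\llb 1, n \rrb$, associate to each partition $x \in P_n$ the linear subspace
\[ f(x) = \Big\{ (y_i) \in V \;\Big|\; \forall\, Q \in x,\; \sum_{i \in Q} y_i = 0 \Big\}. \]
I would then verify the three properties of $f$ in turn: it is order-preserving, rank-preserving, and injective. Restriction to $NCP_n$ is automatic from Definition~\ref{dfn: subposet}, since $NCP_n$ carries the order induced from $P_n$.

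For the order-preserving property, if $x \leq x'$ in $P_n$ then every block of $x$ lies inside some block of $x'$, so each defining equation of $f(x')$ is a sum of defining equations of $f(x)$; hence $f(x) \subseteq f(x')$. For rank-preservation, each block imposes one linear condition on $\F^n$, so a partition $x$ with $k$ blocks gives $k$ equations that are pairwise independent (because blocks are disjoint) and whose sum is precisely the equation cutting out $V$ from $\F^n$. Thus the conditions defining $f(x)$ inside $V$ span a space of dimension $k-1$, giving $\dim f(x) = (n-1)-(k-1) = n-k$, which matches $\rk(x)$ in $P_n$.

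For injectivity I would exploit the basis of differences. Given two indices $i \ne j$, the vector $e_i - e_j \in V$ satisfies $\sum_{\ell \in Q} (e_i - e_j)_\ell = 0$ precisely when $i$ and $j$ lie in the same block of $x$, and equals $\pm 1$ otherwise. Hence $e_i - e_j \in f(x)$ if and only if $i$ and $j$ belong to the same block, so $f(x)$ determines $x$ uniquely. This completes the proof; no step looks like a real obstacle, and the only mild subtlety is noting that the $k$ block-sum equations already contain the hyperplane equation, which prevents an off-by-one error in the rank calculation.
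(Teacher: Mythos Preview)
Your proof is correct and follows exactly the paper's approach: the same hyperplane $V$, the same map $f$, and the same claim that $f$ is an injective rank-preserving poset map. The paper simply asserts these properties without verification, whereas you spell out the order-preserving, rank, and injectivity checks (the last via the vectors $e_i - e_j$), so your write-up is strictly more detailed but not different in substance.
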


\begin{dfn}[Non-crossing partition complex]
We will refer to the orthoscheme complexes of the non-crossing partition lattices $NCP_n$  as the \emph{non-crossing partition complexes}. It is the simplicial realization of $NCP_n$ equipped with the orthoscheme metric, as defined in Definition~\ref{def:orthoscheme}.
\end{dfn}

\begin{lem}[Duality]
\label{lem: duality}
For $n \geq 2$, there is a duality on $NCP_n$, i.e. an order-reversing bijection $x \mapsto x^*$ from $NCP_n$ to itself.
\begin{proof}
Denote by $\{\omega_k\}_{k \in \Z/n\Z} = U_n$ the $n^{th}$ roots of unity. If $x$ is a non-crossing partition of $U_n$, then its dual $x^*$ is the partition of the shifted set
\[U_{n}^*=\{m_{k}=e^{\frac{\pi i}{n}} \omega_k \}_{k \in \Z/n\Z}\]
 defined by: $m_{k}$ and $m_{j}$ belong to the same block of $x^*$ \iff the geodesic segment $[m_{k},m_{j}]$ in $\C$ does not intersect the convex hull of any block of $x$. Then $x^*$ is a non-crossing partition of $U_{n}^*$, with $\rk (x^*) = n-1-\rk (x)$, and ${(x^*)}^*=x$. Now choose some identification between $U_{n}^*$ and $U_n$ (like multiplying by $e^{\frac{-\pi i}{n}}$) to get a map from $NCP_n$ to itself.
\end{proof}
\end{lem}

Note that we will only use duality to reduce the number of cases that will need checking in the later stage of our proof.

\subsection{CAT(0) and CAT(1) spaces}

In this section we will state the definitions of CAT(0) and CAT(1) spaces and recall some of Bowditch's results about locally CAT(1) spaces (see~\cite{bowditch}). Moreover we recall how Brady and McCammond use Bowditch's criteria to give a sufficient condition for braid groups to be CAT(0) (see~\cite{brady_mccammond}).
For a general discussion of CAT($\kappa$) spaces we refer the reader to the book by Bridson and Haefliger~\cite{bridson_haefliger}.

\begin{dfn}
Let $X$ be a geodesic metric space. A \emph{geodesic triangle} $\Delta$ is formed by three geodesic segments, $\gamma_i \colon [0, l_i] \to X$ with $i \in \Z / 3\Z$, such that $\gamma_i(l_i) = \gamma_{i+1}(0)$.

Given such a triangle, we form the \emph{Euclidean comparison triangle} $\Delta' \subset \R^2$ by taking any triangle whose vertices $x_1, x_2, x_3$ satisfy
\[ d_{\R^2}(x_i, x_{i+1}) = l_i .\]
There is an obvious map $c \colon \Delta \to \Delta'$, isometric on edges, sending $\gamma_i(0) \mapsto x_i$; we will refer to it as a \emph{comparison map}.

We say that $X$ is \emph{ CAT(0)} \iff for any two points $x,y$ on any geodesic triangle $\Delta$, we have
\[ d_X(x,y) \leq d_{\R^2}(c(x), c(y)).\]
\end{dfn}

\begin{dfn}
Given a geodesic triangle $\Delta$ with notation as above, with the additional condition that $l_1 + l_2 + l_3 \leq 2 \pi$, we form the \emph{spherical comparison triangle} $\Delta'' \subset S^2$ by taking any triangle whose vertices $x_1, x_2, x_3$ satisfy
\[ d_{S^2}(x_i, x_{i+1}) = l_i .\]
Again there is an obvious map $c \colon \Delta \to \Delta''$, isometric on edges, sending $\gamma_i(0) \mapsto x_i$; we will refer to it as a \emph{comparison map}.

We say that $X$ is \emph{CAT(1)} \iff for any two points $x,y$ on any geodesic triangle $\Delta$ with perimeter at most $2 \pi$, we have
\[ d_X(x,y) \leq d_{S^2}(c(x), c(y))\]
\end{dfn}

\begin{dfn}
A group $G$ has the \emph{CAT(0) property}, or \emph{is CAT(0)},  if and only if it acts properly discontinuously and cocompactly by isometries on a CAT(0) space.
\end{dfn}

\begin{dfn}[Locally CAT(1)]
\label{dfn: locally cat1}
A complete, locally compact, path-metric space $X$ is said to be \emph{locally} CAT(1) if each point of $X$ has a CAT(1) neighbourhood.
\end{dfn}

\begin{dfn}[Shrinking and shrinkable loops]
\label{dfn: shrinkable loops}
Let $X$ be a complete, locally compact path-metric space. A rectifiable loop $l$ in $X$ is said to be \emph{shrinkable to} $l'$ \iff $l'$ is another rectifiable loop in $X$, and there exists a homotopy between $l$ and $l'$ going through rectifiable loops of non-increasing lengths.

A rectifiable loop $l$ is \emph{shrinkable} \iff it is shrinkable to a constant loop.

The loop $l$ is said to be \emph{short} if its length is smaller than $2\pi$.
\end{dfn}

\begin{thm}[{Bowditch~\cite[Theorem~3.1.2]{bowditch}}]
 \label{thm: bowditch criterion}
Let $X$ be a locally CAT(1) space. Then $X$ is CAT(1) if and only if every short loop is shrinkable.
\end{thm}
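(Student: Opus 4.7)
The plan is to prove the two implications separately. The forward direction is standard in comparison geometry, while the converse is the main content of Bowditch's result and carries the harder local-to-global analysis.

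For the forward direction, suppose $X$ is CAT(1) and let $l \colon S^1 \to X$ have length $L < 2\pi$. Fix $p = l(s_0)$; every other point satisfies $d(p, l(s)) \leq L/2 < \pi$, so there is a unique geodesic $\gamma_s$ from $p$ to $l(s)$, varying continuously in $s$. One then produces a length non-increasing homotopy from $l$ to the constant loop at $p$, for instance by coning along the family $\gamma_s$ with an appropriate time reparametrisation handling the subtle regime in which the radial distance exceeds $\pi/2$. This is by now a standard exercise in comparison geometry.

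For the converse, assume every short loop in $X$ is shrinkable. Since $X$ is locally CAT(1), each point has an open CAT(1) neighbourhood. The goal is to verify the four-point CAT(1) comparison on every configuration of perimeter less than $2\pi$. The first sub-step is to promote local geodesics to globally unique geodesics on the relevant scale: if two distinct local geodesics of length strictly less than $\pi$ shared the same endpoints, their concatenation would be a short loop, and its shrinkability combined with the local CAT(1) hypothesis forces the two arcs to coincide. With uniqueness in hand, one approaches a general geodesic triangle $T$ of perimeter less than $2\pi$ by subdividing it into small sub-triangles each contained in a CAT(1) neighbourhood, where comparison holds locally.

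The main obstacle is gluing these local comparisons into a single global one for $T$. The natural strategy is an argument by contradiction: a failure of the CAT(1) inequality for $T$ would, after a minimum-obstruction argument, produce a locally geodesic loop of length strictly less than $2\pi$ which is not globally geodesic, and any length non-increasing homotopy of this loop to a point would be forced to pass through a configuration violating the local CAT(1) comparison somewhere, yielding the desired contradiction. The subtlety — absent in the familiar CAT(0) local-to-global theorem — is that spherical comparison distance is not convex along geodesics once lengths exceed $\pi/2$, so one cannot simply integrate local inequalities as one does in the Euclidean setting. Shrinkability of short loops is precisely the hypothesis that rules out the wrap-around pathologies which this non-convexity might otherwise permit, and making this rigorous constitutes the bulk of the technical work.
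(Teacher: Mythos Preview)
The paper does not prove this theorem at all: it is quoted verbatim as Bowditch's result \cite[Theorem~3.1.2]{bowditch} and used as a black box. There is therefore no ``paper's proof'' to compare your attempt to; the authors simply import the statement.

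As for your sketch itself, the forward direction is fine in outline, though even there the coning homotopy in a CAT(1) space is not automatically length non-increasing once radii pass $\pi/2$ (spherical distance is not convex), so one typically invokes a Birkhoff-type curve-shortening rather than a naive geodesic cone; you gesture at this but do not carry it out. For the converse you correctly identify the architecture --- uniqueness of short geodesics from shrinkability plus local CAT(1), then a subdivision/gluing argument for triangles --- but you openly concede that ``making this rigorous constitutes the bulk of the technical work'' and stop there. That is an honest summary of where the difficulty lies, but it is not a proof: the substance of Bowditch's argument is exactly the careful control of the curve-shortening flow and the exclusion of short closed local geodesics, neither of which you supply. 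If the intent is to use this result in the same way the paper does, a citation is appropriate; if the intent is to actually prove it, what you have written is a roadmap rather than an argument.
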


The following theorem will be an important tool in our argument.

\begin{thm}[{Bowditch~\cite[Theorem~3.1.1]{bowditch}}]
 \label{thm: bowditch shortcut}
Let $X$ be a locally CAT(1) space. Let $x,y \in X$, and consider three paths $\alpha_1,\alpha_2,\alpha_3 \colon [0,1] \ra X$ joining $x$ to $y$. For all $i \in \{1,2,3\}$, consider the loop $\gamma_i =  \alpha_{i+1}^{-1} \circ \alpha_i$ based at $x$ (with indices modulo 3). Assume that for all $i \in \{1,2,3\}$ the loop $\gamma_i$ is short. Assume further that $\gamma_1$ and $\gamma_2$ are shrinkable. Then $\gamma_3$ is shrinkable.
\end{thm}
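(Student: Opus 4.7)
The goal is to produce a length-non-increasing homotopy from $\gamma_3$ to a constant loop, using the given shrinking homotopies $H_1, H_2 \colon [0,1] \times S^1 \to X$ of $\gamma_1, \gamma_2$.

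The starting point is the algebraic identity
\[
\gamma_3 \;=\; \alpha_1^{-1} \circ \alpha_3 \;=\; (\alpha_1^{-1} \circ \alpha_2) \circ (\alpha_2^{-1} \circ \alpha_3) \;=\; \gamma_1^{-1} \circ \gamma_2^{-1}
\]
in the fundamental groupoid, so $\gamma_3$ is freely null-homotopic via the insertion of the detour $\alpha_2 \circ \alpha_2^{-1}$. Geometrically, the images $D_1 = \operatorname{Im}(H_1)$ and $D_2 = \operatorname{Im}(H_2)$ glue along the common edge $\alpha_2$ to form a disk $D$ whose boundary, after cancelling the detour, is exactly $\gamma_3$. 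The aim is to use the ``filling'' $D$ to shrink $\gamma_3$ while controlling length.

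My first attempt would be to interlace the two shrinkings. Let $F_s$ be a one-parameter family of loops built as follows: $F_s$ traverses a level curve of $H_1$ at parameter $s$, jumps across the remnant of $\alpha_2$ (whose length shortens as the two disks contract towards $x$), then traverses a level curve of $H_2$ at parameter $s$, returning to $x$. Provided the parametrisation is chosen so that $F_0 = \gamma_3$ (no detour) and $F_1$ is constant, and so that the ``detour length'' along $\alpha_2$ is grown slowly enough to be compensated by the simultaneous shortening of $H_1(s,\cdot)$ and $H_2(s,\cdot)$, one obtains a null-homotopy of $\gamma_3$. The lengths satisfy
\[
|F_s| \;\leq\; |H_1(s,\cdot)| + |H_2(s,\cdot)| - 2\cdot(\text{length of common portion traversed}),
\]
so the combinatorial ingredients for length control are present.

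The hard part is ensuring that $s \mapsto |F_s|$ is genuinely non-increasing, starting from the initial value $L_3$ rather than the easier $L_1 + L_2$. This is where the local CAT(1) assumption is essential: in a neighbourhood of every point of $D$ we can take geodesic shortcuts, and by local compactness the disk $D$ is covered by finitely many CAT(1) balls, giving a uniform radius of such shortcuts. I would first treat the local case where the entire configuration sits inside one CAT(1) ball (there the result reduces to a cone construction using the unique geodesic from each loop point to $x$, together with a standard spherical-trigonometric length estimate). Then I would leverage the local result and a compactness/partition-of-unity argument on $D$ to patch local shrinkings together into a global length-non-increasing homotopy of $\gamma_3$. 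The principal technical obstacle will be the patching: one needs the local shortcut at overlap regions to depend continuously on the parameter $s$ and to respect the length-monotonicity across the transition, most likely requiring an intermediate value argument in the ``shortcutting depth'' parameter to synchronise the local solutions.
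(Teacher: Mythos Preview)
The paper does not supply a proof of this theorem at all: it is quoted verbatim from Bowditch~\cite{bowditch} and used as a black box. So there is no ``paper's own proof'' to compare against.

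As for your plan itself, there is a real gap at exactly the point you flag. Your interlacing family $F_s$ begins at $F_0=\gamma_3$ only after you have already inserted and cancelled the detour $\alpha_2\circ\alpha_2^{-1}$; the honest starting configuration of the glued homotopy has length $|\alpha_1|+2|\alpha_2|+|\alpha_3|$, not $|\gamma_3|=|\alpha_1|+|\alpha_3|$. Your suggestion to ``grow the detour slowly enough to be compensated by the simultaneous shortening of $H_1(s,\cdot)$ and $H_2(s,\cdot)$'' cannot work in general: nothing in the hypotheses forces $|H_i(s,\cdot)|$ to decrease at any particular rate, and it may well stay constant for an initial interval of $s$. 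So the inequality you write down for $|F_s|$ does not give monotonicity from the value $L_3$.

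The fallback you sketch---cover the glued disk by finitely many CAT(1) balls and patch local geodesic shortcuts---is the right instinct but is not a proof as stated. The difficulty is global: you must produce a \emph{single} monotone homotopy of $\gamma_3$, and local shortcutting on a disk whose boundary already has the wrong length does not obviously yield that. Bowditch's actual argument does not try to splice $H_1$ and $H_2$ directly. Instead he sets up a Birkhoff-type curve-shortening process on the space of short loops in $X$; this process is length-non-increasing by construction and, by local compactness, drives every short loop to a closed geodesic or a point. The role of the hypotheses on $\gamma_1,\gamma_2$ is only to place $\gamma_3$ in the same path-component of the space of short loops as a constant loop, which forces the limit of the flow on $\gamma_3$ to be a point rather than a short closed geodesic. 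If you want to prove the theorem rather than cite it, that is the mechanism you need to build.
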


\begin{thm}[{Brady, McCammond~\cite[Theorem~5.10 and Lemma~5.8]{brady_mccammond}}]
 \label{thm: brady mccammond large}
Assume that for all $3 \leq k \leq n$, the diagonal link of $\vert NCP_k\vert$ does not contain any unshrinkable short loop. Then $\vert NCP_n\vert$ is CAT(0).
\end{thm}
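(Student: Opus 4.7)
The plan is to argue by strong induction on $n$. The small cases are immediate since $|NCP_n|$ is then at most one-dimensional. For the inductive step, assume $|NCP_k|$ is CAT(0) for all $3 \leq k < n$; combined with the hypothesis this gives CAT(1) of the diagonal link $LK(e_{01},|NCP_k|)$ for every $k<n$, since any metric link at a point of a CAT(0) space is CAT(1).

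By the Cartan-Hadamard theorem for piecewise spherical (or piecewise Euclidean) complexes, it suffices to verify (i) simple connectivity of $|NCP_n|$ and (ii) the CAT(1) property of the metric link at every point $p\in|NCP_n|$. Property (i) is immediate: every maximal chain of $NCP_n$ contains both $0$ and $1$, so $0$ is a cone point of the orthoscheme complex and $|NCP_n|$ is contractible.

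For (ii), let $p$ lie in the relative interior of the face $F$ spanned by a chain $x_0<\ldots<x_k$ in $NCP_n$. Standard orthoscheme-complex geometry decomposes $LK(p,|NCP_n|)$ as a spherical join of a round sphere coming from the inner directions of $F$ and the diagonal links $LK(e_{y,z},|[y,z]|)$ ranging over all consecutive pairs $(y,z)$ in the augmented chain $0,x_0,\ldots,x_k,1$. Each such interval $[y,z]$ is either equal to $[0,1]=NCP_n$ (which can only occur when $p$ lies in the interior of the diagonal edge $e_{01}$) or a proper interval, and the classical product description of intervals in $NCP_n$ (giving $[0,x]\cong\prod_j NCP_{|B_j|}$ for the block decomposition of $x$, and $[x,1]\cong NCP_r$ for $r$ the number of blocks) realises every proper interval as a product of smaller $NCP_m$'s. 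Since the diagonal link of a product of bounded graded lattices is the spherical join of the diagonal links of the factors, and spherical joins of round spheres with CAT(1) spaces are CAT(1), the verification reduces to showing that $LK(e_{01},|NCP_k|)$ is CAT(1) for every $3\leq k\leq n$.

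For $k<n$ this is given by the induction. For $k=n$, I apply Bowditch's criterion (Theorem~\ref{thm: bowditch criterion}), which reduces the problem to two conditions on $LK(e_{01},|NCP_n|)$: local CAT(1), and shrinkability of every short loop. The second is exactly the hypothesis at $k=n$. The first follows because the metric link at every point of $LK(e_{01},|NCP_n|)$ is, by the same orthoscheme-link analysis applied one dimension lower, a spherical join of diagonal links $LK(e_{01},|NCP_m|)$ with $m<n$, all CAT(1) by the induction. The main obstacle is bookkeeping: verifying carefully that the link decompositions and the product descriptions of intervals are compatible with the spherical orthoscheme metric so that CAT(1) properties carry across the join formula. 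Once this is in place, the theorem assembles essentially formally from Bowditch's criterion and Cartan-Hadamard.
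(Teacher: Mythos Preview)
The paper does not prove this theorem; it is stated as a citation of Brady--McCammond \cite[Theorem~5.10 and Lemma~5.8]{brady_mccammond} with no accompanying proof. So there is no in-paper argument to compare against.

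Your outline is correct and is essentially the argument in the Brady--McCammond source. The key ingredients you identify are the right ones: contractibility of $|NCP_n|$ via the cone point, the orthoscheme link formula expressing $LK(p,|NCP_n|)$ as a spherical join of diagonal links of intervals, the standard self-similarity of $NCP_n$ (every proper interval is a product $\prod_j NCP_{m_j}$ with each $m_j<n$), Bowditch's criterion for the top-rank diagonal link, and the Cartan--Hadamard/link condition to globalise. One small cosmetic point: rather than deducing ``$|NCP_k|$ CAT(0) $\Rightarrow$ diagonal link CAT(1)'' via the general fact about links in CAT(0) spaces, it is slightly cleaner (and closer to how Brady--McCammond organise it) to run the induction directly on the statement ``the diagonal link of $NCP_k$ is CAT(1) for all $k\le n$''; this avoids invoking CAT(0) of the smaller orthoscheme complexes as an intermediate step and makes the recursion structurally uniform. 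Either way the logic is sound, and the ``bookkeeping'' you flag---compatibility of the orthoscheme metric with the join decomposition---is exactly what Lemma~5.8 in the cited reference handles.
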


\begin{prop}[{Brady, McCammond~\cite[Proposition~8.3]{brady_mccammond}}]
 \label{prop: cat0 implies braids}
If $\vert NCP_m\vert$ is CAT(0) for all $m \leqslant n$, then the $n$-strand braid group is CAT(0), that is it acts geometrically on a CAT(0) space.
\end{prop}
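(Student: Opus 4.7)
The plan is to follow Brady's original argument: from the dual Garside structure on $B_n$, construct a cell complex on which $B_n$ acts properly and cocompactly, and then reduce CAT(0)-ness of its universal cover to a link condition that is governed inductively by the CAT(0)-ness of the smaller $NCP_m$'s.

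First I would recall Brady's classifying space. Let $\Delta \in B_n$ be the dual Garside element. Under the prefix order of the dual braid monoid the interval $[1,\Delta]$ is a lattice isomorphic to $NCP_n$, and Brady builds a $K(B_n,1)$ by gluing the orthoscheme complex $K$ of $[1,\Delta]$ to itself along sub-orthoscheme complexes corresponding to subintervals. Writing $\widetilde K$ for its universal cover, the action of $B_n$ is proper and cocompact, so it is enough to prove that $\widetilde K$ is CAT(0).

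The closed cells of $\widetilde K$ are orthoscheme complexes of intervals $[g,h]$ in the dual braid monoid. By the standard parabolic (Bessis) decomposition, every such $[g,h]$ factors as a product $\prod_i [1,\Delta_{k_i}]$ with $[1,\Delta_{k_i}]$ isomorphic to $NCP_{k_i}$ and $\sum_i k_i \leq n$; the orthoscheme complex of such a product is the metric product of the orthoscheme complexes of the factors. By hypothesis each factor is CAT(0), hence every closed cell of $\widetilde K$ is CAT(0) as a product of CAT(0) spaces, and in particular $\widetilde K$ is locally CAT(0) at interior points of cells.

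Finally I would invoke Gromov's link criterion: a simply connected piecewise Euclidean complex is CAT(0) iff the link of every vertex is CAT(1). Each vertex link in $\widetilde K$ decomposes as a spherical join of diagonal links of intervals meeting at that vertex, and by the product decomposition above, each such diagonal link is itself a spherical join of diagonal links of $NCP_k$ for $k \leq n$. Each $NCP_k$ has CAT(0) orthoscheme complex by hypothesis, so the link of its diagonal edge is CAT(1) (as the link of a point in a CAT(0) space), and a spherical join of CAT(1) spaces is CAT(1). The main difficulty is the combinatorial identification of $\widetilde K$'s cell structure and the parabolic factorisation of intervals in the dual braid monoid; once these are in place, the conclusion follows from the standard CAT(0)-product and CAT(1)-join packaging summarised above.
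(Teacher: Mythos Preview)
The paper does not supply its own proof of this proposition; it is quoted verbatim as \cite[Proposition~8.3]{brady_mccammond} and used as a black box. Your sketch is a reasonable outline of the Brady--McCammond argument it cites (build the dual-Garside orthoscheme complex for $B_n$, identify intervals as products of smaller $NCP_k$'s, then run the link condition), so there is nothing in the present paper to compare against.

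One small point of care in your write-up: the step ``the link of its diagonal edge is CAT(1) (as the link of a point in a CAT(0) space)'' is not quite right as stated, since the diagonal link is the link of the \emph{edge} $e_{01}$, not of a vertex. The conclusion is still correct---in a CAT(0) piecewise-Euclidean complex all face links are CAT(1), or one can invoke the equivalence in \cite[Lemma~5.8 and Theorem~5.10]{brady_mccammond} directly---but the parenthetical justification should be adjusted.
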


\section{Turning points and turning faces}

In order to get ready for the proof of our main result Theorem~\ref{thm: NCP6 is CAT(1)}, we introduce further tools: turning points and turning faces. Some of their properties hold more generally in arbitrary path-connected subset of metric spaces equipped with the induced length metric.  We collect definitions and properties in the following two subsections.

\subsection{Turning points}

Turning points are points on locally geodesic loops in a subspace of a metric space where said loop fails to be a local geodesic in the ambient space. The precise definition is a follows.

\begin{dfn}[Turning points]
\label{dfn: turning point}
Let $X$ be a path-connected subspace of a geodesic metric space $B$, and endow $X$ with the induced length metric. Suppose that $l \colon D \to X$ is a local isometry, where $D$ is a metric space. We say that a point $t \in D$ is a \emph{turning point} of $l$ in $B$  \iff $i \circ l$ fails to be a local isometry at $t$, where $i \colon X \to B$ is the inclusion map.
\end{dfn}

\begin{dfn}[Locally geodesic loops]
\label{dfn: locally geodesic loops}
Let $X$ be a metric space.
We say that $l \colon S^1 \to X$ is a \emph{locally geodesic loop} in $X$ \iff $l$ is a local isometry, where $S^1$ is given the length metric of the quotient of some closed interval $I$ of $\R$ by its endpoints.
The \emph{length} of $l$ is defined to be the length of $I$.

We say that $l \colon I \to X$ is a \emph{locally geodesic path} in $X$ \iff $l$ is a local isometry, where $I$ is a closed interval of $\R$. The \emph{length} of $l$ is defined to be the length of $I$.
\end{dfn}

\begin{lem}
\label{lem: convex nhood}
Let $X$ be a path-connected subset of a geodesic metric space $B$, and endow $X$ with the induced length metric. Suppose that $l \colon D \to X$ is a locally geodesic path or loop, with $D$ being respectively $I$ or $S^1$. Let $t \in D$. Suppose that there exists a subset $N \subseteq X$, such that $N$ contains the convex hull
in $B$ of the image under $l$ of some neighbourhood of $t$ in $D$. Then $t$ is not a turning point.
\begin{proof}
Suppose (for a contradiction) that $t$ is a turning point. As $i \circ l$ fails to be a local geodesic at $t$,  there exist $t_1, t_2 \in D$ in a neighbourhood of $t$ such that
\[ d_B\big(l(t_1), l(t_2)\big) < d(t_1,t_2) \]
and such that $l(t_j) \in N$ for $j \in  \{1,2\}$.

Since $B$ is a geodesic metric space, we can realise the distance between $l(t_1)$ and $l(t_2)$ with a geodesic segment $g$ in $B$. Since $N$ contains the endpoints of $g$, it contains the whole of $g$. Hence in particular $g$ lies in $X$, which (as was claimed) contradicts the fact that $l$ was a local geodesic.
\end{proof}
\end{lem}

\begin{rmk}
We will often identify (isometrically) a neighbourhood of a point $t \in S^1$ with an interval in $\R$ containing $t$ in its interior. We will therefore feel free to write $[t -\varepsilon, t + \varepsilon]$ etc. (for a small $\varepsilon$) to denote a subset of $S^1$.
\end{rmk}

\begin{dfn}[Consecutive turning points]
Suppose that we have a subset $T \subset S^1$. We will say that $t, t' \in T$ are \emph{consecutive} \iff there is a path in $S^1$ with endpoints $t$ and $t'$ not containing any other point in $T$. A shortest such path will be denoted by $[t,t']$.
\end{dfn}

\begin{rmk}
Note that $[t,t']$ defined above is unique provided that the  cardinality of $T$ is at least 3.
\end{rmk}

\begin{lem}
\label{lem: cardinality of T}
Let $X$ be a path-connected subset of a CAT(1) space $B$, and endow $X$ with the induced length metric. If $l \colon S^1 \to X$ is a locally geodesic loop in $X$ of length $0<L < 2\pi$, then the cardinality of the set of turning points $T$ of $l$ is greater than 2. Moreover, $l\vert_{[t,t']}$ is a geodesic in $B$ for any pair of consecutive turning points $t,t'$.
\begin{proof}
Suppose that we can find three distinct points $t_1, t_2, t_3 \in S^1$ such that each pairwise distance is strictly bounded above by $\frac1 2 L < \pi$, and such that $T$ is contained in $[t_1,t_2] \cup \{t_3\}$, where $[t_i,t_j]$ denotes the shortest segment of $S^1$ with endpoints $t_i$ and $t_j$ not containing $t_k$ for \[\{i,j,k\} = \llb 1,3 \rrb.\] Note that $l\vert_{[t_1,t_3]}$ and $l\vert_{[t_2,t_3]}$ are geodesics in $B$ -- this follows from the fact that local geodesics of length at most $\pi$ are geodesics in CAT(1) spaces (essentially because the statement is true for the 2-sphere $S^2$).

Consider a geodesic triangle $\Delta = l(t_1) l(t_2) l(t_3)$ in $B$, and let $\Delta'$ be the comparison triangle in $S^2$. Since $L < 2 \pi$, the perimeter of $\Delta$ (and hence also of $\Delta'$) is smaller than $2 \pi$. Therefore $\Delta'$ cannot be a great circle in $S^2$.

Suppose that $t_3 \not\in T$. Then the angle of $\Delta$ at $l(t_3)$ is equal to $\pi$, and the same is true in the comparison triangle $\Delta'$ (by the CAT(1) inequality). But then the triangle is degenerate, and hence so is $\Delta$. In particular the geodesic from $l(t_1)$ to $l(t_2)$ goes via $l(t_3)$. But this contradicts the assumption that the distance (in $B$) between $l(t_1)$ and $l(t_2)$ is smaller than $\frac1 2 L$. So $t_3 \in T$. We will now use this trick to prove our claims.

If $\vert T \vert \leqslant 1$ then we immediately get a contradiction by taking either any three points in $S^1$ satisfying the conditions above, or the turning point and two other points so that the triple satisfies the condition.

If $\vert T \vert = 2$ and the two points are not antipodal in $S^1$, then we can always (very easily indeed) find a third point so that the triple satisfies our condition. If the turning points are antipodal, then the two local geodesics given by $l$, which connect the images of the turning points, coincide. This is because local geodesics of length smaller than $\pi$ are geodesics in $B$, and such geodesics in CAT(1) spaces are unique. But then $l$ cannot be a local geodesic in $X$.
We have thus shown that $\vert T \vert \geqslant 3$.

Now suppose we have two consecutive turning points, $t_1$ and $t_2$. If $[t_1,t_2]$ is of length at most $\pi$, then $l\vert_{[t_1,t_2]}$ is a geodesic as before. If the length is larger than $\pi$, then in particular it is larger than $\frac 1 2 L$, and so we can take the midpoint $t_3 \in [t_1,t_2]$ and (applying the argument above) conclude that $t_3 \in T$, which in turn contradicts the definition of $[t_1,t_2]$.
\end{proof}
\end{lem}

\subsection{Turning faces }

We are mainly interested in locating turning points on loops in linearly embedded subcomplexes of the orthoscheme complex of linear subspaces of a vector space. To understand their behaviour we use properties of the supporting faces which will be called turning faces and are introduced in this section.

\begin{dfn}
\label{dfn: linearly embedded}
Let $V$ be an n-dimensional vector space over a division algebra and denote by $B$ be the diagonal link $LK(e_{01},\vert S(V) \vert)$ of the orthoscheme complex $\vert S(V) \vert$ of $S(V)$. Hence $B$ is a spherical building of type $A_{n-1}$ equipped with the standard CAT(1) metric.
We say that a geometric realization $X$ of a simplicial complex is \emph{linearly embedded in $B$} \iff there exists a bounded graded lattice $P$ of rank $n$, such that
\begin{enumerate}
\item $P$ is a subposet of $S(V)$ with a geometric realisation $\vert P \vert$ isometric to the full subcomplex of $\vert S(V) \vert$ spanned by $P$, and
\item $X$ is isometric (and isomorphic as simplicial complexes) to the diagonal link $LK(e_{01},\vert P \vert) \subseteq B$ equipped with the length metric induced from $B$.
\end{enumerate}
We will call the metric on $X$ the \emph{spherical orthoscheme} metric. Note that $X$ has dimension $n-2$.
\end{dfn}

Since a linearly embedded $X$ is a subcomplex of a building, we will use the standard buildings vocabulary when talking about $X$. Hence  \emph{a chamber} or \emph{an apartment}  in $X$ is a chamber or an apartment of $B$, respectively, which is contained in $X$.
Note also that since the ranks of $P$ and $S(V)$ agree, the complex $X$ is a union of chambers.

For the remainder of this subsection let $X$ be linearly embedded in $B$.

\begin{dfn}[Rank and corank]
Let $F$ be a face of codimension $m$ in $X$. Then $F$ is the span of vertices $x_1, \ldots, x_{n-1-m}$ of ranks $r_1, \ldots, r_{n-1-m}$. We define the set $\rk (F) = \{r_1, \ldots , r_{n-1-m}\}$ to be the \emph{rank} of $F$, and the set $\crk (F) = \llb 1, n-1 \rrb \smallsetminus \rk(F)$ to be the \emph{corank} of $F$.
\end{dfn}

\begin{dfn}[Turning face]
Let $t$ be a turning point of a locally geodesic loop $l \colon S^1 \to X$. The smallest (with respect to inclusion) intersection $F$ of a chamber in $X$ containing $l([t,t-\varepsilon))$ and a chamber in $X$ containing $l([t,t+\varepsilon))$, for sufficiently small $\varepsilon>0$, will be called the \emph{turning face} of $t$.
\end{dfn}

\begin{lem}
\label{lem: finite}
Let $l$ be a locally geodesic loop or path in $X$. Then the set $T$ of turning points of $l$ is finite.
\begin{proof}
Let $l \colon D \ra X$, where $D=S^1$ or $D=I=[0,L]$ and $l(0)\neq l(L)$. If $D=I$, notice that for $\varepsilon>0$ small enough $l([0,\varepsilon])$ is contained in a chamber in $X$, hence by Lemma~\ref{lem: convex nhood} the point $0 \in I$ is not a turning point of $l$, and similarly neither is $L$.

We claim that $T$ is a discrete subset of $D$. Suppose that $t \in T$ is a turning point. Let $F$ be the turning face of $t$. By definition, there exists $\varepsilon>0$ such that $F$ is the intersection of a chamber $C^-$ in $X$ containing $l([t,t-\varepsilon))$ and a chamber $C^+$ in $X$ containing $l([t,t+\varepsilon))$.

Then $l|_{[t-\varepsilon,t]}$ is the geodesic segment from $l(t-\varepsilon)$ to $l(t)$ in $C^- \subset X$. In particular it is also locally geodesic in $B$, so there is no turning point in $(t-\varepsilon,t)$, and similarly in $(t,t+\varepsilon)$ and $C^+$. Therefore $T$ is discrete.

Note that $T$ is closed -- this follows directly from the fact that if $t \in D$ is not a turning point, then $l$ is a geodesic (in $B$) at some open neighbourhood of $t$ in $D$, and so in particular none of the points in this open neighbourhood are turning points themselves. Hence $T$ is closed, and therefore compact since $D$ is compact.
We have thus shown that $T$ is compact and discrete, and so it is finite.
\end{proof}
\end{lem}

The following lemma is the first result which gives us some combinatorial control over the turning points. It is precisely this type of control which will allow us to perform the inspection in the proof of the main theorem.

\begin{lem}
\label{lem: coconsecutive}
Let $l$ be a locally geodesic loop in $X$. Then for every turning point of $l$ in $B$, its turning face has a corank which contains two consecutive integers.
\begin{proof}
Suppose that we have $t \in S^1$, a turning point of $l$,  whose image $x$ under $l$ is contained in the turning face $F$ in $X$. By definition, there exists $\varepsilon > 0$ such that $l((t-\varepsilon,t]) \subseteq C^-$ and $l([t,t+\varepsilon)) \subseteq C^+$, where $C^-$ and $C^+$ are chambers of $X$ such that $F=C^+ \cap C^-$.

Assume that the corank of $F$ does not contain two consecutive integers. Then the sets of vertices of $C^-$ and $C^+$ differ at vertices of ranks
\[1 \leq r_1< \dots <r_k \leq n-2\]
with $\forall \ i \in \llb 1, k-1 \rrb \, : \, r_{i+1}-r_i \geq 2$. Then for every $\varepsilon = (\varepsilon_1,\ldots,\varepsilon_k) \in \{\pm\}^k$, consider the chamber $C^\varepsilon$ spanned by $C^+ \cap C^-$ and, for every $1 \leq i \leq k$, by the vertex of rank $r_i$ in $C^{\varepsilon_i}$. Since all vertices of $C^\varepsilon$ belong to $X$, we know that $C^\varepsilon$ belongs to $X$.

By item~\ref{one} of Proposition~\ref{prop:buildings} the link of $F=C^+ \cap C^-$ in $B$ is itself a spherical building. This is easily seen by taking successive links of single vertices of $F$. Taking the first such link, we either get a building of type $A_{n-2}$ (when the vertex we removed was of rank 1 or $n-1$), or a building of type $A_l \times A_{n-2-l}$ (when the vertex was neither of minimal nor maximal rank). Repeating this process we obtain a spherical building whose type is determined precisely by the structure of the corank of $F$; more specifically, when the corank has no consecutive integers, the building is of type $A_1^k$. Thus the apartments are spherical joins of $k$ copies of the 0-sphere.

Let $N=\bigcup_{\varepsilon \in \{\pm\}^k} C^\varepsilon$. Observe that the image of $N$ in the link of $F$ in $B$ is precisely one of the apartments, and therefore it is convex.
Since the link $\lk(x,N)$ of $x$ in $N$ is isometric to the spherical join $\lk(x,F) * \lk(F,N)$, it is convex in the link $\lk(x,B) \simeq \lk(x,F) * \lk(F,B)$ of $x$ in $B$. For $\delta>0$ small, the $\delta$-ball around $x$ is isometric to the $\delta$-ball around the cone point in the cone over the link of $x$, according to~\cite[Theorem~7.16]{bridson_haefliger}. Since $\lk(x,N)$ is convex in $\lk(x,B)$, we conclude that the $\delta$-ball around $x$ in $N$ is convex in $B$. Since $N$ contains the image under $l$ of some neighbourhood of $t$ in $S^1$, using Lemma~\ref{lem: convex nhood}  we show that $t$ is not a turning point.
\end{proof}
\end{lem}

\begin{lem}
\label{lem: modular do not turn}
Let $l \colon I \ra X$ be a locally geodesic segment in $X$ with a turning point $t$ in $B$. Let $E^+$ (respectively $E^-$) be minimal faces in $X$ containing the image under $l$ of a right (respectively left) $\varepsilon$-neighbourhood of $t$ for some $\varepsilon >0$.
Then
the simplicial convex hull of $E^+ \cup E^-$ is not contained in $X$.
\begin{proof}
Let $N$ denote the simplicial convex hull of ${E^- \cup E^+}$ in $B$. Suppose (for a contradiction) that $N$ is contained in $X$. The subcomplex $N$ is metrically convex (as it is an intersection of apartments, which are metrically convex in $B$), and contains the image under $l$ of some neighbourhood of $t$ in $S^1$. Therefore using Lemma~\ref{lem: convex nhood}, we show that $t$ is not a turning point, and this concludes the proof.
\end{proof}
\end{lem}

\section{Proof of the main theorem}

In this section we will prove our main result.  Let us first fix some notation and recall a few facts.

Denote by $\vert NCP_n\vert$ the orthoscheme complex of the non-crossing partition lattice for some $n\geq 3$, and let $X$ denote the diagonal link $ LK(e_{01}, \vert NCP_n \vert) $ equipped with the spherical orthoscheme metric. Thus $X$ is the geometric realization of an $n-3$-dimensional simplicial complex whose vertices are in one-to-one correspondence with the partitions in $NCP_n\setminus\{0,1\}$, see Remark~\ref{rem: labels in link}. By the rank of $X$ we mean the rank of the poset  $NCP_n\setminus\{1\}$, which is $n-2$.

Recall that $X$ is linearly embedded in a spherical building $B$, which is the diagonal link of the linear subspace lattice of an $(n-1)$-dimensional vector space $V$ by Lemmata~\ref{lem: NCPs are linear} and \ref{prop:building type A}. Note that this building $B$ has type $A_{n-2}$ and is a simplicial complex of dimension $n-3$.

Moreover, $B$ is a CAT(1) metric space, which is why (as Brady and McCammond remarked, see~\cite[Remark~8.5]{brady_mccammond}) the spherical orthoscheme metric on $X$ is a good candidate to be CAT(1) for all $n \geq 3$.

Recall from Remark~\ref{rem: labels in link} that the vertices of $X$ are naturally labeled by elements of $NCP_n$.  We may thus talk about partitions in $X$.

\begin{rmk} \label{rmk: ncp 3 and 4}
Note that for $n=3$, the diagonal link $X$ in $NCP_3$ is the disjoint union of $3$ points, so it is CAT(1). For $n=4$, the diagonal link $X$ in $NCP_4$ is a subgraph of the incidence graph of the Fano plane, so it has combinatorial girth $6$. Since each edge has length $\frac{\pi}{3}$, its girth is $6  \frac{\pi}{3}=2\pi$, so $X$ is CAT(1). A picture of the diagonal link of $NCP_4$ can be found in Figure~\ref{fig_ncp4}.
\end{rmk}

\begin{figure}[h]
\begin{center}
    	\resizebox{!}{5cm}{\includegraphics{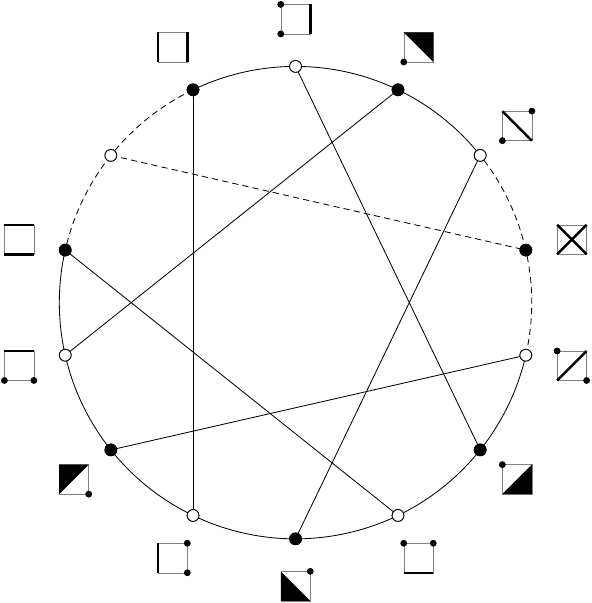}}
\caption[ncp4]{The diagonal link of $NCP_4$ is shown with solid lines. Dotted lines represent the missing two vertices of the Fano plane, one of which is a vertex of the diagonal link of the geometric realisation of the partition lattice. }
\label{fig_ncp4}
\end{center}
\end{figure}

\begin{dfn}[Non-crossing trees]
A \emph{non-crossing forest} of $U_n$ is a metric forest embedded in $\C$ with vertex set $U_n$, whose edges are geodesic segments in $\C$. When such a forest has only one connected component, we call it a \emph{non-crossing tree}
\end{dfn}

\begin{rmk}
\label{rmk: forests and vertices}
Note that every non-crossing forest corresponds to an element in $NCP_n$. The correspondence is obtained by saying that two points in $U_n$ lie in the same block \iff they lie in a single connected component of the forest.
In particular this gives a one-to-one correspondence between vertices of rank 1 in $X$ (that is the corresponding partition is of rank 1) and non-crossing forests with only one edge.

This way we can also  associate a subset of $X$ to a non-crossing tree by taking the span of all vertices associated to proper subforests of our non-crossing tree.
\end{rmk}

\begin{prop}
\label{prop: apartments and trees}
Let $A$ be an apartment in $B$. Then $A$ is included in $X$ if and only if its $(n-1)$ rank $1$ vertices lie in $X$ and correspond to the edges of a non-crossing tree.
\begin{proof}
Suppose $A$ is an apartment lying in $X$.
Each rank 1 vertex $v_i$ of $A$ corresponds to
a basis vector $\epsilon_i$ of $V$, the vector space that is used to define $B$ as the diagonal link in $S(V)$.
Each such vertex also corresponds to
an edge $e_i \subset \C$ connecting two points in $U_n$, as explained in the remark above. We claim that $T$, the union of edges $e_i$, is an embedded tree.

Let $v_i$ and $v_j$ be two distinct vertices of $A$ of rank 1. Then their join in $B$ has rank 2 (it is the plane $\langle \epsilon_i, \epsilon_j \rangle$), and lies in $A$. But $A \subseteq X$, and so the partition $v_i \vee v_j$ has rank 2. Observe that if $e_i$ intersects $e_j$ away from $U_n$, then the join $v_i \vee v_j$ has to contain the convex hull of $e_i \cup e_j$ as a block (since it is non-crossing), and therefore its rank is at least 3. Hence $e_i$ can intersect $e_j$ only at $U_n$, and therefore $T$ is embedded.

Now suppose that $T$ contains a cycle. Without loss of generality let us suppose that the shortest cycle is given by the concatenation of edges $e_1, \ldots, e_k$ for some $k$. Then note that the joins in $X$ satisfy
\[
\bigvee_{i=1}^k v_i = \bigvee_{i=1}^{k-1} v_i.
\]
But, as before, they are equal to the joins in $B$ (since $A \subseteq X$). This yields the equality
\[
\langle \epsilon_1, \ldots, \epsilon_k \rangle = \langle \epsilon_1, \ldots, \epsilon_{k-1} \rangle,
\]
which contradicts the fact that the vectors $\epsilon_i$ are linearly independent.

We have thus shown that $T$ is an embedded forest. But $T$ consists of ${n-1}$ edges, and so an Euler characteristic count yields that it has exactly one connected component. Therefore $T$ is a tree as required.

\smallskip

Now suppose the vertices of rank 1 of an apartment $A$ lie in $X$ and form a non-crossing tree $T$. The apartment is the span of the closure of the set of its rank 1 vertices under taking joins in $B$. The fact that $T$ is non-crossing tells us that the joins of these vertices taken in $B$ or $X$ coincide, and hence all vertices of $A$ lie in $X$. Therefore $A \subseteq X$.
\end{proof}
\end{prop}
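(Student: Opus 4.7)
The plan is to prove both implications separately, and in each case the decisive tool will be the comparison between joins computed inside $NCP_n$ (equivalently in $X$) and joins computed in the linear lattice $S(V)$ (equivalently in $B$) through the rank-preserving embedding $f$ of Lemma~\ref{lem: NCPs are linear}. As a preliminary remark I note that whenever $A \subseteq X$, the join of any set of rank-$1$ vertices of $A$ computed in $B$ automatically lies in $X$, and must therefore coincide with its join computed inside $NCP_n$: in any subposet the join of a subset dominates the ambient join by the universal property, while conversely the ambient join is bounded above by the subposet join whenever the latter is a common upper bound in the ambient poset.

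For the forward implication, suppose $A \subseteq X$. Each rank-$1$ vertex of $A$ then corresponds to a rank-$1$ element of $NCP_n$, that is, a non-crossing partition whose unique non-trivial block is an edge between two points of $U_n$; denote these edges by $e_i$ and set $T = \bigcup_i e_i$. To see that $T$ is non-crossing, I take two distinct rank-$1$ vertices $v_i, v_j$ of $A$ and note that their join in $B$ is the plane $\langle \epsilon_i, \epsilon_j \rangle$ of rank~$2$; by the preliminary remark their join in $NCP_n$ also has rank~$2$, but if $e_i$ and $e_j$ crossed off $U_n$ then the smallest non-crossing partition above both would be forced to fuse all four endpoints into a single block, yielding rank~$3$. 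To see that $T$ is a tree, I observe that $T$ has $n-1$ edges on $n$ vertices, so it suffices to rule out cycles: a shortest cycle $e_1, \ldots, e_k$ in $T$ produces the $NCP_n$ identity $\bigvee_{i=1}^k v_i = \bigvee_{i=1}^{k-1} v_i$ (both sides equal the partition lumping the cycle vertices into one block), whereas in $B$ one has the proper inclusion $\langle \epsilon_1, \ldots, \epsilon_{k-1} \rangle \subsetneq \langle \epsilon_1, \ldots, \epsilon_k \rangle$, contradicting the equality of joins in $B$ and $X$.

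For the converse direction, assume the rank-$1$ vertices of $A$ correspond to the edges of a non-crossing tree $T$, and let $v_i = \epsilon_{a_i} - \epsilon_{b_i} \in V$ be the edge vector associated to the $i$-th edge $e_i = \{a_i, b_i\}$ of $T$. The key claim is that for every subset $S \subseteq \llb 1, n-1 \rrb$ the subspace $\langle v_i : i \in S \rangle$ equals $f(p_S)$, where $p_S$ is the partition of $U_n$ whose blocks are the connected components of the subforest $\{e_i : i \in S\}$ of $T$; this follows by splitting the claim across connected components and counting dimensions. Since $T$ is non-crossing so is every subforest, hence $p_S \in NCP_n$ and every vertex of $A$ lies in $X$, giving $A \subseteq X$. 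The main obstacle throughout is the careful bookkeeping of when joins in the sublattice $NCP_n$ agree with those in $S(V)$: the non-crossing condition is precisely what ensures this agreement, and both implications reduce to exploiting this principle, in the contrapositive for the forward direction and constructively for the converse.
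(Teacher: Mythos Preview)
Your proof is correct and follows essentially the same approach as the paper: both directions hinge on the coincidence of joins in $NCP_n$ and in $S(V)$, using the rank-$2$ join to rule out crossings and the cycle equation $\bigvee_{i=1}^k v_i = \bigvee_{i=1}^{k-1} v_i$ to rule out cycles. Your converse is somewhat more explicit than the paper's---you compute $\langle v_i : i \in S \rangle = f(p_S)$ directly via the edge vectors $\epsilon_{a_i}-\epsilon_{b_i}$ and a dimension count, whereas the paper simply asserts that the non-crossing condition forces the joins in $B$ and $X$ to agree---but the content is the same.
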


\begin{dfn}[Universal points]
A point $x \in X$ is said to be \emph{universal} if it belongs to a face in $X$, all of whose vertices are partitions with exactly one block containing more than one element, and such that this block only contains consecutive elements of $U_n$. Such a face is also called \emph{universal}.
\end{dfn}

Note that every universal face is contained in a universal chamber.

\begin{ex}
Consider for $n=6$ the edge between the two partitions shown in Figure~\ref{fig_ab}, that is
\[
\{\{1,2,3\},\{4\},\{5\},\{6\}\}< \{\{1,2,3,4,5\},\{6\}\}.
\]
Then any point on the edge they form is universal in the sense just defined.
\end{ex}

\begin{figure}[h]
\begin{center}
    	\resizebox{!}{2cm}{\includegraphics{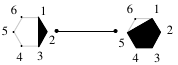}}
\caption[universal]{All points on this edge in $X$ are universal. }
\label{fig_ab}
\end{center}
\end{figure}

\begin{lem}
\label{lem: starring at univ}
Let $x \in X$ be a universal point, and let $y \in X$ be non opposite to $x$ in $B$, that is $d_B(x,y) < \pi$. Then the geodesic in $B$ between $x$ and $y$ lies in $X$. In other words, $X$ is $\pi$-star-shaped at $x$.
\begin{proof}
Let $C$ be a universal chamber with $x \in C$, and let $C'$ be a chamber containing $y$. We will construct an apartment $A \subseteq X$ containing both.

Let $\{x_1, \ldots, x_{n-2} \}$ denote the vertices of $C$, with indices corresponding to ranks. We are going to construct a total order on $U_n$. Note that, seen as partitions, $x_{i+1}$ is obtained from $x_i$ by expanding the unique block with multiple elements (which we will refer to a as the big block of $x_i$) by an element adjacent to the block. We will call this element the new element of $x_{i+1}$. We take our order to be one in which an element $v \in U_n$ is larger than $u$ whenever there exists $i$ such that $v$ is new for $x_{i+1}$, and $u$ belongs to the big block of $x_i$ (we allow $i=n-1$ and set $x_{n-1} = 1$, the partition with one element). Note that there are precisely two such total orders, depending on how we order vertices in the big block of $x_1$. Note also that given any element $v \in U_n$, we get a non-crossing partition $o_v$ with blocks
\[ \{ w \in U_n \mid w \leqslant v \} \textrm{ and } \{ w \in U_n \mid w > v \}.\]

Now let $\{y_1, \ldots, y_{n-2} \}$ denote the vertices of $C'$, with indices corresponding to ranks. Note that, seen as partitions, $y_{i+1}$ is obtained from $y_i$ by combining two blocks into one. We are now going to inductively construct embedded forests with vertex set $U_n$, and edges given by geodesic segments.

We set $T_1$ to be the forest with vertex set $U_n$, and a single edge connecting the two elements of the unique non-trivial block of $y_1$.
Suppose we have already defined $T_i$. Then $T_{i+1}$ is obtained from $T_i$ by adding an edge connecting elements $v$ and $w$ such that
\begin{itemize}
\item $v$ and $w$ do not lie in a common block in $y_i$;
\item $v$ and $w$ do lie in a common block in $y_{i+1}$;
\item $v>w$;
\item $v$ is the minimal element in its block in $y_i$; and
\item the new forest $T_{i+1}$ is embedded.
\end{itemize}
To show that such a pair $v,w$ exists let us look at minimal vertices in the two blocks of $y_i$ that become one in $y_{i+1}$. We let $v$ be the larger of the two. Then we know that the block not containing $v$ contains at least one smaller vertex. Together with the fact that $o_v$ defined above is non-crossing, the existence of a suitable $w$ is guaranteed.

Now it is clear that $T = T_{n-1}$ (with $y_{n-1} = 1$, the full partition) is an embedded tree with vertex set $U_n$. It is also clear that the apartment $A$ defined by $T$ (using Proposition~\ref{prop: apartments and trees}) contains $C'$.

Observe that every element (except the minimal one) is connected with an edge to a smaller element. This is due to the fact that every element except the minimal one stops being the smallest element in its block for some $i$ (when we add $y_{n-1} =1$ to our considerations). When it stops being minimal, it plays the role of $v$ above, and so is connected to a smaller element. From this we easily deduce that $x_i \in A$ for every $i$, and so that $C \subseteq A$.

Now both points $x$ and $y$ lie in a common apartment $A$, and the distance between them is smaller than $\pi$. Hence there exists a unique geodesic in $B$ between them, and it lies in $A$. But $A \subseteq X$, so this concludes the proof.
\end{proof}
\end{lem}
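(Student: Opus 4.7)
The plan is to build a single apartment $A \subseteq X$ that contains both $x$ and $y$; once this is done, since $B$ is a spherical building and hence CAT(1), the fact that $x$ and $y$ are not opposite (so at distance $<\pi$) gives a unique geodesic, which must lie in any apartment containing both endpoints, and in particular in $A \subseteq X$.

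To build $A$, I first pick a universal chamber $C$ containing $x$ and an arbitrary chamber $C'$ containing $y$. By Proposition~\ref{prop: apartments and trees} the apartments in $B$ that lie in $X$ correspond bijectively to non-crossing spanning trees of $U_n$. So the task reduces to producing a non-crossing tree $T$ whose edge set contains the edges associated to the rank~$1$ vertices of $C$ and whose vertex set meets each rank of $C'$ in a subpartition; equivalently, $T$ should refine the filtration of $U_n$ given by the maximal chain corresponding to $C'$. The universality of $C$ is what lets me start: the maximal chain in $C$ determines in a canonical way a total order $<$ on $U_n$ (up to the choice of the two vertices at rank one), because at each step exactly one new vertex is attached to the growing big block. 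This gives a linear backbone along $U_n$ compatible with the chain of $C$.

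Next I construct the tree by induction following the chain $y_1 < y_2 < \ldots < y_{n-1}<1$ of $C'$. At step~$i$ two blocks $B_1, B_2$ of $y_i$ are merged to form a block of $y_{i+1}$; I attach an edge joining $v$ and $w$ where $v$ is the $<$-larger of the two minima of $B_1, B_2$ and $w$ is a suitably chosen smaller vertex in the other block. The key geometric observation is that the partition $o_v$ that splits $U_n$ into elements $\leqslant v$ and elements $>v$ is non-crossing (this is built into the definition of~$<$ via the universal chamber), which makes it possible to pick $w$ so that the new edge does not cross any previously added edge, and so that $v$ has become non-minimal in its block at some later step. Counting edges, the result $T$ is an embedded spanning tree of $U_n$, hence non-crossing.

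Finally I check that every edge $e_i$ of the maximal chain in $C$ is present in $T$: by construction, a vertex $v \in U_n$ is connected by an edge in $T$ to a strictly smaller vertex the first time $v$ stops being minimal in its block along the chain of $C'$; applied with the augmented chain ending at $1$, this ensures that the edges of $C$ appear in $T$. By Proposition~\ref{prop: apartments and trees}, the apartment $A$ determined by $T$ lies in $X$ and contains both $C$ and $C'$, so both $x$ and $y$. The hardest step is the inductive construction: one must produce at each stage an edge that is simultaneously non-crossing, compatible with $<$, and compatible with the merges prescribed by $C'$. This is where the universal hypothesis on $x$ is essential and cannot be dropped.
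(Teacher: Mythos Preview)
Your proposal is correct and follows essentially the same approach as the paper: pick a universal chamber $C \ni x$ and a chamber $C' \ni y$, extract a total order on $U_n$ from $C$, observe that each initial segment $o_v$ is a non-crossing partition, and then build a non-crossing spanning tree $T$ inductively along the chain of $C'$ by, at each merge, connecting the larger of the two block-minima to a smaller vertex in the other block; finally use that every non-minimal vertex is joined to a smaller one to conclude $C \subseteq A$. Your phrasing ``the edges $e_i$ of the maximal chain in $C$ are present in $T$'' is slightly off---what one actually checks is that each partition $x_i$ (a vertex of $C$) lies in the apartment determined by $T$, which follows because the big block of $x_i$ is an initial segment for $<$ and hence spans a subtree of $T$---but this is only a wording issue, not a gap.
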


\begin{lem}
\label{lem: shrinking at univ}
Let $x \in X$ be a universal point, and let $l$ be a short loop in $X$ through $x$. Then $l$ is shrinkable in $X$.
\end{lem}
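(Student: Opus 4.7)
The plan is to combine the star-shapedness established in Lemma~\ref{lem: starring at univ} with an iterated application of Bowditch's shortcut theorem (Theorem~\ref{thm: bowditch shortcut}) along a fine subdivision of $l$.

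Parametrize $l \colon [0, L] \to X$ with $l(0) = l(L) = x$ and $L < 2\pi$. Then for every $s$ we have $d_B(x, l(s)) \leqslant d_X(x, l(s)) \leqslant L/2 < \pi$, so Lemma~\ref{lem: starring at univ} supplies a $B$-geodesic $\gamma_s$ from $x$ to $l(s)$ lying in $X$. Fix an integer $N$ with $L + L/N < 2\pi$, subdivide $l$ at the points $p_i = l(iL/N)$ into arcs $\alpha_i$, and set $\beta_i := \gamma_{iL/N}$ (so $\beta_0 = \beta_N$ is the constant path at $x$). Consider the partial loops $l_i := \alpha_1 \cdots \alpha_i \cdot \beta_i^{-1}$ and the triangle loops $T_i := \beta_{i-1} \cdot \alpha_i \cdot \beta_i^{-1}$, both based at $x$. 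The estimate $|\beta_i| = d_B(x, p_i) \leqslant L/2$ shows $|l_i| \leqslant L$ and $|T_i| \leqslant L + L/N < 2\pi$, so all loops involved are short.

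I will prove by induction on $i$ that $l_i$ is shrinkable in $X$. The base case $l_0$ is the constant loop. For the step, apply Theorem~\ref{thm: bowditch shortcut} to the three paths from $x$ to $p_i$ given by $A = \alpha_1 \cdots \alpha_i$, $B = \beta_i$, and $C = \beta_{i-1} \alpha_i$. The three associated loops simplify---after cancelling the trivial pair $\alpha_i \alpha_i^{-1}$---to $l_i$, $T_i^{-1}$, and $l_{i-1}^{-1}$. Given shrinkability of $l_{i-1}^{-1}$ (by the inductive hypothesis) and of $T_i$, the theorem yields shrinkability of $l_i$, and taking $i = N$ gives the desired conclusion $l = l_N$ shrinkable.

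The main obstacle is the shrinkability of each triangle loop $T_i$. Although $T_i$ is itself a short loop through $x$---so invoking the lemma directly is circular---it sits inside the cone sub-disk $K_i := \bigcup_{s \in [(i-1)L/N, iL/N]} \gamma_s \subseteq X$, whose ``arc'' side $\alpha_i$ has length $L/N$ tending to $0$ as $N \to \infty$. I expect to handle this step either by a further recursive Bowditch-style subdivision of $\alpha_i$---producing ever-thinner descendent triangles until each sits inside a single chamber of $X$, where shrinkability is immediate since the chamber is a CAT(1) orthoscheme---or, more slickly, by invoking Reshetnyak's majorization theorem to fill $T_i$ by a convex disk $D$ in $S^2$ along a length-non-increasing map $\phi \colon D \to B$. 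Using the starring lemma to identify radial geodesics out of the apex $\phi^{-1}(x) \in \partial D$, one sees $\phi(D) \subseteq K_i \subseteq X$, and $T_i$ can then be shrunk to the apex in $D$ and carried through $\phi$ to a length-non-increasing shrinking inside $X$.
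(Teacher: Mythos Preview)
Your overall strategy is different from the paper's, and it has a genuine gap at exactly the point you flag as the ``main obstacle'': the shrinkability of the triangle loops $T_i$.

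Neither of your two proposed fixes works as stated. For the recursive subdivision, notice that further subdividing $\alpha_i$ only makes the third side shorter while the two radial sides $\beta_{i-1},\beta_i$ keep length up to $L/2$; the descendant triangles stay long and thin and never sit inside a single chamber, so the recursion has no base case. For the Reshetnyak approach, the majorizing map $\phi\colon D\to B$ is merely $1$-Lipschitz and need not carry radial geodesics in $D$ to $B$-geodesics, so the claim $\phi(D)\subseteq K_i$ is unjustified. If instead you try to build the filling directly as the ruled ``fan'' $\Phi(s,t)=\gamma_s(t)$ (which \emph{does} land in $K_i\subseteq X$), the induced homotopy is the geodesic cone contraction from $x$, and that contraction is \emph{not} length-non-increasing in general: already in $S^2$, take $x$ the north pole and a thin isoceles triangle with legs of length $\pi-\varepsilon$ and base $\delta$ with $2\sin\varepsilon<\delta<2\varepsilon$; the cross-section length at parameter $t$ is $\sin\!\big(t(\pi-\varepsilon)\big)\,\delta/\sin\varepsilon$, whose derivative at $t=1$ is negative enough to make the total perimeter increase as $t$ drops from $1$. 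So star-shapedness alone does not hand you the required monotone null-homotopy, and the circularity you noticed is real.

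The paper sidesteps all of this. It first uses an Arzel\`a--Ascoli compactness argument to replace $l$ by a shortest loop through $x$ in its shrinking class; such a minimizer is a locally geodesic path in $X$. If it had a turning point $t$ in $B$, then $l|_{[0,t]}$ would already be a $B$-geodesic (taking $t$ to be the first turning point), and the starring lemma lets you replace $l|_{[0,t+\varepsilon]}$ by the strictly shorter $B$-geodesic $[x,l(t+\varepsilon)]\subset X$, contradicting minimality. Hence $l$ is locally geodesic in $B$, and since $B$ is CAT(1) and $L<2\pi$ this forces the two halves of $l$ to coincide, so $l$ is constant. This argument uses star-shapedness only to produce a single local shortcut, never a global monotone homotopy, which is precisely why it avoids the difficulty you ran into.
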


\begin{proof}
The Arzel\`a--Ascoli theorem tells us that we can assume without loss of generality that $l$ cannot be shrunk to a shorter loop going through $x$.
Then, since every point in $X$ has a neighbourhood isometric to a metric cone over the point, the loop $l$ is a locally geodesic path in $X$ except possibly at $x$. We claim that $l$ is constant.

By contradiction, assume that $l$ is not constant, and has length $0 < L < 2\pi$. View $l$ as a path $l:[0,L] \ra X$ from $x$ to $x$. As observed above, $l$ is a local geodesic. If $l$ is not locally geodesic in $B$, then it has a turning point in $(0,L)$. According to Lemma~\ref{lem: finite}, the set of turning points of $l$ is finite. Consider a turning point closest to $0$ or $L$; without loss of generality assume that $0< t \leq \frac{L}{2} < \pi$ is a turning point such that there is no turning point in $(0,t)$. Then $l|_{[0,t]}$ is a locally geodesic segment in $B$ of length smaller than $\pi$, hence it is a geodesic segment in $B$.

Then for $\varepsilon>0$ small, the geodesic segments $[x,l(t+\alpha \varepsilon)] \subset B$ for $\alpha \in (0,1]$  lie in $X$ by Lemma~\ref{lem: starring at univ}, and are shorter than $l|_{[0,t+\alpha \varepsilon]}$. They also vary continuously with $\alpha$, since $B$ is CAT(1) (compare Figure~\ref{fig_le3-28}). Therefore $l$ can be shrunk by replacing $l\vert_{[0,t+\alpha \varepsilon]}$ by $[x,l(t+\alpha \varepsilon)]$, which contradicts the assumption on $l$.

So $l$ is locally geodesic in $B$, and therefore $l|_{[0,\frac{L}{2}]}$ and $l^{-1}|_{[\frac{L}{2},L]}$ are two locally geodesic paths in $B$ from $x$ to $l(\frac{L}{2})$ of length smaller than $\pi$. They must be equal, since $B$ is CAT(1), and hence $l$ is constant.
\end{proof}

\begin{figure}[htbp]
\begin{center}
   	\resizebox{!}{3cm}{\includegraphics{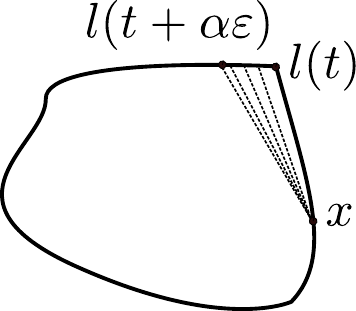}}
\caption[dominant]{Illustration of curve shortening in the proof of Lemma~\ref{lem: shrinking at univ}.}
\label{fig_le3-28}
\end{center}
\end{figure}

\begin{lem}[Failing modularity with respect to $NCP_n$]
\label{lem: modularity and convex hull}
Let $E, F$ be two faces of $X$. The simplicial convex hull of $E \cup F$ in $B$ is contained in $X$ \iff no two vertices of $E \cup F$ fail modularity with respect to $NCP_n$.
\end{lem}
\begin{proof}
In Lemma~\ref{lem:geodesic in building} we have already identified the simplicial convex hull $N$ of $E \cup F$ with the (full subcomplex spanned by) the smallest subset of the vertex set of $B$ stable under taking joins and meets, and containing the vertices of $E \cup F$. If a pair of vertices of $E \cup F$ fails modularity, then we immediately see that $N$ does not lie in $X$.

Let us now assume that no two vertices of $E \cup F$ fail modularity. This means that for vertices $x$ and $y$ in $E \cup F$ their joins and meets taken in $NCP_n$ agree with the joins and meets taken in $S(V)$.

Let us focus on joins for the moment; the situation for meets is analogous, and the results for joins are easily transferred to results for meets using the duality of $NCP_n$.
The join $x \vee y$ taken in $NCP_n$ equals the one taken in $S(V)$ \iff they are of the same rank. The rank can be easily read off the block structure of the partition, and thus we immediately see that the two joins agree \iff whenever
the convex hull of a block of $x$ intersects the convex hull of a block of $y$, then this intersection contains some point of $U_n$, i.e. no two blocks are crossing.

We claim that $N \subseteq X$, that is that we can perform sequences of meets and joins (in $S(V)$ on vertices of $E \cup F$ and never leave $NCP_n$. Let us suppose that this is not the case. Without loss of generality we may assume that there are vertices $z$ and $w$ in $X$, each obtainable from the vertices of $E \cup F$ by a sequence of meets and joins, and such that $z \vee w$ (taken in $S(V)$) does not lie in $X$. The discussion above tells us that $z$ and $w$ contain crossing blocks. In particular, there exist points $\alpha_1, \alpha_2, \beta_1, \beta_2 \in U_n$ such that $\alpha_1 \neq \alpha_2$ lie in a block of $z$, $\beta_1 \neq \beta_2$ lie in a block of $w$, and these two blocks cross.

Since $z$ is obtained from vertices of $E \cup F$ by taking joins and meets, there exists a vertex therein in which $\alpha_1$ and $\alpha_2$ are contained in a single block, and this block does not contain both $\beta_1$ and $\beta_2$ (it might however contain one of them). Without loss of generality let us assume that there exists a vertex of $E$ satisfying this property; let $e$ denote the minimal such vertex in $E$.

Let us suppose that there exists a vertex in $E$ satisfying the analogous property for $\beta_1$ and $\beta_2$; let $e'$ denote the minimal such vertex. Now if $e' \leq e$ then the block of $e$ containing $\alpha_1$ and $\alpha_2$ must also contain $\beta_1$ and $\beta_2$, since otherwise $e$ is not a non-crossing partition. This is a contradiction. Similarly, when $e < e'$, then $e'$ cannot be as defined.
We conclude that there exists a vertex in $F$ such that $\beta_1$ and $\beta_2$ lie in a common block thereof, and this block does not contain both $\alpha_1$ and $\alpha_2$. Let $f$ be the minimal such vertex. Note that $f$ is in fact the minimal vertex of $E \cup F$ satisfying the above property; using an analogous argument we show that $e$ is also the minimal vertex of $E \cup F$ in which $\alpha_1$ and $\alpha_2$ lie in a common block, which does not contain both $\beta_1$ and $\beta_2$.

We now claim that $e \vee f$ (taken in $S(V)$) does not lie in $NCP_n$.
It is enough to find a block in $e$ which crosses a block in $f$. We already have two candidate blocks, the one containing $\alpha_1$ and $\alpha_2$ in $e$ and the one containing $\beta_1$ and $\beta_2$ in $f$. It could happen however that these blocks have an element in $U_n$, say $\gamma$, in common. But then, using the minimality of $e$ and $f$, we conclude that the crossing blocks of $z$ and $w$ also contain $\gamma$, and thus are not crossing. This is a contradiction.

We have thus found two vertices in $E \cup F$ which fail modularity with respect to $NCP_n$.
\end{proof}

\begin{lem}
\label{lem: turning points univ for NCP5}
When $n=5$, turning faces in $X$ are universal vertices.
\begin{proof}
When $n=5$, by Lemma~\ref{lem: coconsecutive} we know that the corank of a turning face contains at least $2$ consecutive integers. Since the rank of $X$ is equal to $3$, we conclude that $F$ is a vertex of rank either $1$ or $3$. By Lemmata~\ref{lem: modular do not turn} and \ref{lem: modularity and convex hull}, we know that $F$ has two neighbours which fail modularity with respect to $NCP_5$, hence $F$ is necessarily (by inspection) a universal vertex.
\end{proof}
\end{lem}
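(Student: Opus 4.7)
The plan is to combine the corank restriction of Lemma~\ref{lem: coconsecutive} with Lemma~\ref{lem: modular do not turn}, and to verify a sublattice property of the star of a diagonal vertex in $NCP_5$.

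First, applying Lemma~\ref{lem: coconsecutive} tells us that the corank of a turning face $F$ contains two consecutive integers. For $n=5$ chambers of $X$ have three vertices of ranks $1,2,3$: a chamber has empty corank, edges have a singleton corank, and the rank-$2$ vertex has corank $\{1,3\}$, which is not consecutive. Only the rank-$1$ and rank-$3$ vertices, with coranks $\{2,3\}$ and $\{1,2\}$ respectively, satisfy the criterion. Hence $F$ is a rank-$1$ or rank-$3$ vertex, and by the duality of Lemma~\ref{lem: duality} (which preserves universality and exchanges ranks $1$ and $3$) it suffices to treat the rank-$1$ case.

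Next, by Lemma~\ref{lem: modular do not turn} there exist vertices $x^+, x^-$ in the minimal faces $E^\pm\supseteq F$ whose join or meet in $S(V)$ (equivalently in $P_5$) fails to be non-crossing. Since any pair involving $F$ itself yields a join or meet that is trivially in $NCP_5$, we may assume $x^+, x^- > F$ strictly in $NCP_5$. It therefore suffices to prove that for every non-universal rank-$1$ vertex $F$, the upper set of $F$ in $NCP_5$ is closed under joins and meets taken in $P_5$.

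This last step is the main obstacle. Up to rotation, $F=\{\{1,3\},\{2\},\{4\},\{5\}\}$. The diagonal $\{1,3\}$ splits the pentagon into an inner region, with boundary points cyclically ordered $(1,2,3)$, and an outer region, with boundary points cyclically ordered $(1,5,4,3)$. Restricting any non-crossing partition above $F$ to each region induces a bijection between the upper set of $F$ in $NCP_5$ and the product of the $2$ non-crossing partitions of $(1,2,3)$ containing $\{1,3\}$ with the $5$ non-crossing partitions of $(1,5,4,3)$ containing $\{1,3\}$. Joins and meets in $P_5$ decompose componentwise under this bijection. The first factor is a sublattice of $P_3$ (since every partition of three points is non-crossing). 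In the second factor, meets of non-crossing partitions are always non-crossing, and joins stay non-crossing because the unique crossing partition of four cyclically ordered points is $\{\{1,4\},\{3,5\}\}$, which cannot equal a join of two partitions each forcing $1\sim 3$. Therefore the upper set of $F$ is closed under joins and meets in $P_5$, contradicting Lemma~\ref{lem: modular do not turn}. So $F$ must be universal.
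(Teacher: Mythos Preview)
Your approach is the same as the paper's: reduce to a rank-$1$ (or rank-$3$) vertex via Lemma~\ref{lem: coconsecutive}, then invoke Lemma~\ref{lem: modular do not turn} and inspect. Your inspection is considerably more explicit than the paper's ``by inspection'', and the product decomposition of $[F,1]_{NCP_5}$ is a clean way to organise it.

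There is, however, a genuine gap at the parenthetical ``(equivalently in $P_5$)''. Failing modularity is defined in Definition~\ref{dfn: failing} using joins and meets in $S(V)$, not in $P_5$. The embedding $f \colon P_n \hookrightarrow S(V)$ of Lemma~\ref{lem: NCPs are linear} always preserves joins (since $f(z)=\operatorname{span}\{e_i-e_j : i\sim_z j\}$), so $\vee_{S(V)}=\vee_{P_5}$ and your treatment of joins is fine. But $f$ does \emph{not} preserve meets in general: for instance $x=\{\{1,2\},\{3,4\},\{5\}\}$ and $y=\{\{2,3\},\{1,4\},\{5\}\}$ have $P_5$-meet equal to the discrete partition (which lies in $NCP_5$), while $f(x)\cap f(y)=\langle e_1-e_2+e_3-e_4\rangle$ is a line not of the form $f(z)$ for any partition $z$. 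So ``$S(V)$-meet in $NCP_5$'' and ``$P_5$-meet in $NCP_5$'' are genuinely different conditions, and your argument as written only establishes the latter.

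The gap is easily closed using what you have already shown. Your product decomposition exhibits $[F,1]_{NCP_5}$ (with the $P_5$-operations) as a $2$-chain times $M_3$, hence a modular lattice. Since joins in $S(V)$ and in $P_5$ agree, the rank identity $\operatorname{rk}(x)+\operatorname{rk}(y)=\operatorname{rk}(x\vee y)+\operatorname{rk}(x\wedge y)$, valid in both the linear lattice $S(V)$ and in your modular interval, forces $\dim\big(f(x)\cap f(y)\big)=\operatorname{rk}(x\wedge_{NCP_5} y)$; together with the inclusion $f(x\wedge_{NCP_5} y)\subseteq f(x)\cap f(y)$ this gives equality. Adding this one sentence makes your argument complete.
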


\begin{cor}
\label{cor: NCP5 is CAT(1)}
The non-crossing partition complex $NCP_5$ is CAT(0).
\begin{proof}
Assume there is an unshrinkable short loop in $X$, the diagonal link of $NCP_5$. Then it has a turning point by Lemma~\ref{lem: cardinality of T}, which is a universal vertex by Lemma~\ref{lem: turning points univ for NCP5}, so the loop can be shrunk by Lemma~\ref{lem: shrinking at univ}. Hence by Theorem~\ref{thm: brady mccammond large} (and Remark~\ref{rmk: ncp 3 and 4}), $NCP_5$ is CAT(0).
\end{proof}
\end{cor}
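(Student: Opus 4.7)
The plan is to argue by contradiction and assemble the previously proved lemmas. Assume $X$, the diagonal link of $NCP_5$, contains an unshrinkable short loop $l$; the goal is to derive a contradiction and then invoke Theorem~\ref{thm: brady mccammond large}.

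The first step is to locate a turning point. Since $B$, the ambient spherical building of $PGL(V)$ for $V$ of dimension $n-1 = 4$, is CAT(1), Lemma~\ref{lem: cardinality of T} applies and guarantees at least three turning points of $l$ in $B$ (in particular, at least one). Picking any such turning point $t$, Lemma~\ref{lem: turning points univ for NCP5} identifies the associated turning face as a universal vertex of $X$; hence $l$ passes through a universal point $x = l(t)$. Lemma~\ref{lem: shrinking at univ} now applies to the short loop $l$ through the universal point $x$ and shows that $l$ is shrinkable, contradicting our assumption.

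Combining this with Remark~\ref{rmk: ncp 3 and 4}, which covers the base cases $k = 3, 4$ (so that the diagonal links of $NCP_3$ and $NCP_4$ contain no unshrinkable short loop either), the hypothesis of Theorem~\ref{thm: brady mccammond large} is satisfied for $n = 5$, yielding the desired conclusion.

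The substantive work has already been done in the preceding lemmas, so no obstacle remains at the level of the corollary itself. The real difficulty, carried out earlier, was Lemma~\ref{lem: turning points univ for NCP5}: it used Lemma~\ref{lem: coconsecutive} to cut the rank of a turning face down to $1$ or $3$ (since the corank must contain two consecutive integers in $\llb 1,3\rrb$), and then combined this with the failure of modularity from Lemma~\ref{lem: modular do not turn} together with a case-by-case inspection of non-crossing partitions of $U_5$ to identify the face as universal. Once that identification is in hand, the corollary is a formal consequence.
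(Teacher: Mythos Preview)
Your proof is correct and follows essentially the same approach as the paper: assume an unshrinkable short loop, use Lemma~\ref{lem: cardinality of T} to produce a turning point, identify its turning face as a universal vertex via Lemma~\ref{lem: turning points univ for NCP5}, shrink using Lemma~\ref{lem: shrinking at univ}, and conclude with Theorem~\ref{thm: brady mccammond large} together with Remark~\ref{rmk: ncp 3 and 4}. Your additional paragraph recapping why Lemma~\ref{lem: turning points univ for NCP5} holds is accurate but not needed at the level of the corollary.
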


\begin{figure}[htbp]
\begin{center}
   	\resizebox{!}{1cm}{\includegraphics{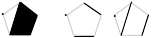}}
\caption[dominant]{On the left the turning vertex $\{\{1,2,3,4\},\{5\}\}$ in $NCP_5$, with two neighbours that fail modularity with respect to $NCP_5$. Compare Example~\ref{ex_modularity}}
\label{fig_modularity_nc}
\end{center}
\end{figure}

\begin{ex}[Vertices failing modularity with respect to $NCP_5$]\label{ex_modularity}
Figure~\ref{fig_modularity_nc} pictures the partition $\{\{1,2,3,4\},\{5\}\}$ in $NCP_5$, with a pair of neighbours given by the partitions $\{\{1,2\},\{3,4\},\{5\}\}$ and $\{\{2,3\},\{1,4\},\{5\}\}$. These neighbours fail modularity with respect to $NCP_5$ as defined in Definition~\ref{dfn: failing}. More explicitly this means the following: recall that the lattice of non-crossing partitions can be linearly embedded into a linear lattice $S(V)$. The partitions $\{\{1,2\},\{3,4\},\{5\}\}$ and $\{\{2,3\},\{1,4\},\{5\}\}$ then represent linear subspaces of the underlying vector space $V$. Their common span in the linear lattice is a linear subspace which cannot be represented (under our fixed embedding $NCP_n \to S(V)$) by a non-crossing partition.

The same vertex with crossing neighbours that fail modularity is illustrated in Figure~\ref{fig_modularity} where we also show how these vertices fit into the diagonal link of the partition complex.
Pictured are three apartments $A_1, A_2$ and $A_3$ of $B$. The apartments $A_1$ and $A_2$ are contained in $X$ and intersect in the gray-shaded region. The geodesic in $X$ connecting $u$ and $w$ runs via $v$. The apartment $A_3$ (which does not lie in $X$) contains $u,v$ and $w$ and of course also the (now shorter) geodesic in the diagonal link of the partition complex connecting $u$ and $w$. The intersection $A_1\cap A_3$ is shown in blue while the yellow area highlights  $A_2\cap A_3$.
\end{ex}

\begin{figure}[htbp]
\begin{center}
\vspace{2ex}
   	\resizebox{!}{6cm}{\includegraphics{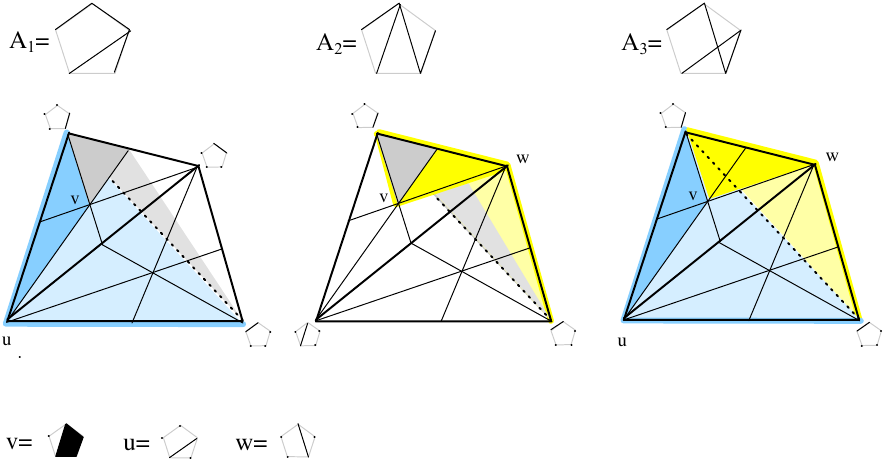}}
\caption[dominant]{This figure shows the vertex $v=\{\{1,2,3,4\},\{5\}\}$ in $NCP_5$ with two neighbours $u, w$ that fail modularity with respect to $NCP_5$. For more details see Example~\ref{ex_modularity}. }
\label{fig_modularity}
\end{center}
\end{figure}

\begin{dfn}[Dominant vertex]
A vertex $v$ of a face $F$ of $X$ is called \emph{dominant} \iff every apartment $A$ in $X$, with $v \in A$, contains $F$.
\end{dfn}

The following lemma will be accompanied by figures illustrating the cases and subcases.
In each case we look at a turning face which can either be one of the two edges illustrated, or a single vertex. Next to each turning edge (subcases (b) and (c)) we depict a pair of adjacent vertices that fail modularity. For the turning vertices (subcases (a)) at least one of the examples next to subcases (b) and (c) gives a pair of adjacent vertices which fail modularity.

The pictured partitions should be read as follows: the vertex in the upper left hand corner will be labeled by 1 and all the other vertices will be labeled clockwise from 2,...,6. Hence the partitions shown in case (1b) are (from left to right): \{\{1\}, \{2,3,4\}, \{5\}, \{6\}\}, connected to \{\{1\},\{2,4\}, \{3\}, \{5\}, \{6\}\}, then \{\{1,5\}, \{2,3,4\}, \{6\}\} and \{\{1\}, \{2,3,4,6\}, \{5\}\}.

\begin{lem}
\label{lem: turning points for NCP6}
When $n=6$, a non-universal turning face $F$ in $X$, up to the symmetries of $U_6$, falls into one of the following cases.
\begin{enumerate}
\item[(1a)] $F$ is the vertex $v = \{\{2,4\},\{1\},\{3\},\{5\},\{6\}\}$ of rank $1$
\item[(1b)] $F$ is the following edge of rank $(1,2)$ with dominant vertex $v$ as in (1a).
\begin{figure}[H]
\begin{center}
   	\resizebox{!}{1cm}{\includegraphics{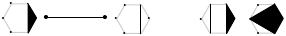}}
\end{center}
\end{figure}
\item[(1c)] $F$ is the following edge of rank $(1,4)$ with dominant vertex $v$ as in (1a).
\begin{figure}[H]
\begin{center}
   	\resizebox{!}{1cm}{\includegraphics{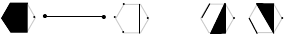}}
\end{center}
\end{figure}

\item[(2a)] $F$ is a single vertex of rank 2, namely either $v = \{\{1,2\},\{3,4\},\{5\},\{6\}\}$  or $v'= \{\{1,2\},\{4,5\},\{3\},\{6\}\}$
\item[(2b)] $F$ is the following edge of rank $(1,2)$ with dominant vertex $v'$ as in (2a)
\begin{figure}[H]
\begin{center}
   	\resizebox{!}{1cm}{\includegraphics{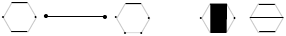}}
\end{center}
\end{figure}
\item[(2c)] $F$ is the following edge of rank $(1,2)$ with dominant vertex $v$ as in (2a)
\begin{figure}[H]
\begin{center}
   	\resizebox{!}{1cm}{\includegraphics{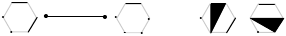}}
\end{center}
\end{figure}

\item[(3a)] $F$ is a single vertex of rank 3, namely either $v = \{\{1,2,3,5\},\{4\},\{6\}\}$ or $v'= \{\{1,2,4,5\},\{3\},\{6\}\}$
\item[(3b)] $F$ is the following edge of rank $(3,4)$ with dominant vertex $v$ as in (3a)
\begin{figure}[H]
\begin{center}
   	\resizebox{!}{1cm}{\includegraphics{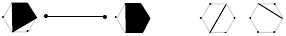}}
\end{center}
\end{figure}
\item[(3c)] $F$ is the following edge of rank $(3,4)$ with dominant vertex $v'$ as in (3a)
\begin{figure}[H]
\begin{center}
   	\resizebox{!}{1cm}{\includegraphics{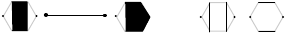}}
\end{center}
\end{figure}

\item[(4a)] $F$ is a single vertex $v = \{\{1,2,3,4\},\{5,6\}\}$ of rank $4$
\item[(4b)] $F$ is the following edge of rank $(3,4)$ with dominant vertex $v$ as in (4a)
\begin{figure}[H]
\begin{center}
   	\resizebox{!}{1cm}{\includegraphics{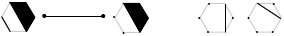}}
\end{center}
\end{figure}
\item[(4c)] $F$ is the following edge of rank $(1,4)$ with dominant vertex $v$ as in (4a)
\begin{figure}[H]
\begin{center}
   	\resizebox{!}{1cm}{\includegraphics{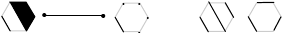}}
\end{center}
\end{figure}
\end{enumerate}
\end{lem}
\begin{proof}
The proof of this lemma is essentially an inspection.

When $n=6$, by Lemma~\ref{lem: coconsecutive} we know that the corank of a turning face contains at least $2$ consecutive integers. Since the rank of $X$ is equal to $4$, we conclude that $F$ is either a vertex or an edge. Using duality, we can restrict to the cases where $F$ is a vertex of rank $1$ or $2$, or an edge of rank $\{1,2\}$ or $\{1,4\}$. In these cases it is not hard to see that the cases listed are the only ones which allow for a pair of adjacent vertices failing modularity, in view of Lemmata~\ref{lem: modular do not turn} and \ref{lem: modularity and convex hull}.
Once we have listed all possible cases, we observe (again by inspection) that each turning face contains a dominant vertex.

Let us look at one example more closely; it is typical in the sense that in all the cases the argument is essentially the same.

Suppose that the turning face is a vertex of rank 1. Then, up to symmetry, it is either the vertex of subcase (1a), that is the partition $\{\{2,4\},\{1\},\{3\},\{5\},\{6\}\}$, or the vertex $\{\{2,5\},\{1\},\{3\},\{4\},\{6\}\}$. In the former case we can indeed find two vertices adjacent to our vertex which fail modularity. In the latter it is impossible: the lattice interval between that vertex $v$ and the maximal element $1$ is isomorphic to the product of two copies of the lattice $NCP_3$. As a consequence, no two vertices adjacent to $v$ can fail modularity, since the way $NCP_3$ embeds into $S(V)$ preserves meets and joins.
\end{proof}

When $F$ is a face in $X$ and $i \in U_n$, let $F_i \subseteq U_n$ denote the smallest subset of $U_n$ that appears as a block in a vertex of $F$ and that contains $i$ properly. If the set of such subsets is empty we set $F_i = U_n$.

\begin{lem}
\label{lem: leaf of tree}
Let $C$ be a chamber in $X$, and let $i, j$ be consecutive elements of $U_n$. If $C_i$ contains $j$, then there exists an apartment in $X$ containing $C$, $v$ and $w$, where $v$ is the universal vertex having the single nontrivial block $\{i,j\}$ and $w$ is the universal vertex opposite to $v$ in $B$ given by the partition $w=\{\{i\}, \llb 1,n \rrb \smallsetminus \{i\} \}$.
\begin{proof}
Write $v_1,\ldots , v_{n-2}$ for the vertices of $C$, with the indices corresponding to the ranks,  and let $v_{n-1}= 1$ be the maximal element in $NCP_n$.
Denote the edge $\{i,j\}$ by $e$.

Let $k$ be minimal such that $C_i$ is a block of $v_k$.

Any apartment in $X$ containing both $v$ and $w$ is represented by a non-crossing tree $T$ which contains the edge $e$, and such that the subforest obtained by removing $e$ from the tree corresponds to $w$ (since $w$ is opposite $v$), using the correspondence from Remark~\ref{rmk: forests and vertices}.
We will now construct such a tree $T$ by inductively picking edges $e_l=$ for $l=1,\dots, n-1$ in $U_n$.

 Take $e_1$ to be the edge corresponding to $v_1$. For each $2\leq l\leq k-1$ choose $e_l$ to be an edge such that the edges $e_1, \ldots, e_l$ form a non-crossing forest corresponding to $v_l$ (again using Remark~\ref{rmk: forests and vertices}). This is possible since vertices of $C$ are non-crossing partitions.

Choose $e_k=e$. Observe that the edges $e_1, \ldots, e_k$ still form a non-crossing forest, since the edge $e$ cannot cross any other edge, and the vertex $i$ was isolated in the forest formed by $e_1, \ldots, e_{k-1}$, and so no cycles appear.

Now we continue choosing edges $e_l$ for $k+1, \ldots, n-1$ as before, with the additional requirement that
none of the edges $e_l$ with $l\geq k+1$  connects to $i$. Choosing the remaining edges like this is possible since in each step two blocks of $v_{l}$ are joined to form a block of $v_{l+1}$, and the block containing $i$ always contains at least also the vertex $j$, hence if a block is joined to the one containing $i$ then we may do this using an edge emanating from $j$ (or some other vertex in this block different from $i$).
The resulting tree is by construction non-crossing. The apartment $A$ spanned by $T$ contains $v$ and $C$. Further, since $e_k$ is the only edge connected to $i$, the apartment $A$ does also contain the vertex $w$.
\end{proof}
\end{lem}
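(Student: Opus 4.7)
The plan is to exploit the correspondence between apartments of $X$ and non-crossing spanning trees on $U_n$ given by Proposition~\ref{prop: apartments and trees} together with Remark~\ref{rmk: forests and vertices}. Concretely, I want to produce a non-crossing spanning tree $T$ on $U_n$ such that (a) the edge $\{i,j\}$ lies in $T$, (b) $i$ has valence $1$ in $T$, and (c) for each rank $\ell$, there is an initial segment of the edges of $T$ whose connected components are the blocks of the rank $\ell$ vertex of $C$. Then by Proposition~\ref{prop: apartments and trees} the apartment $A$ corresponding to $T$ lies in $X$; conditions (a) and (b) force $A$ to contain both $v$ and the unique $w$ opposite to $v$, while condition (c) forces $A\supseteq C$.

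Write $v_1<\dots<v_{n-2}$ for the vertices of $C$ in order of rank and let $k$ be the minimal index such that $C_i$ is a block of $v_k$. For each $\ell<k$ the vertex $i$ is a singleton block of $v_\ell$, so I choose edges $e_1,\dots,e_{k-1}$ inductively, at each step selecting any edge between two blocks of $v_{\ell-1}$ that merge in $v_\ell$ so that the forest obtained remains non-crossing; this is possible because $v_\ell$ itself is non-crossing. At step $k$ the block merge prescribed by $v_k$ takes the singleton $\{i\}$ and attaches it to the block of $v_{k-1}$ that contains $j$; I take $e_k=\{i,j\}$. Since $i$ and $j$ are consecutive in $U_n$ and no previously chosen edge is incident to $i$, this edge introduces no crossing.

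For $\ell>k$ the block of $v_\ell$ containing $i$ always contains $j$ as well, so whenever a further block is merged into the block of $i$, I can realise that merge by an edge whose endpoint in $C_{v_\ell}(i)$ is some vertex different from $i$ (for instance $j$). For merges that do not involve the block of $i$, any non-crossing choice will do. Continuing this way produces a non-crossing tree $T=e_1\cup\dots\cup e_{n-1}$ in which $\{i,j\}$ is the unique edge incident to $i$. The apartment $A$ associated to $T$ then contains $C$ by construction, contains $v$ as the rank $1$ vertex corresponding to $e_k$, and contains $w$ as the rank $n-2$ vertex corresponding to the complementary subforest $T\setminus\{e_k\}$, whose components are $\{i\}$ and $U_n\setminus\{i\}$.

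The main point to check, and the only step where anything genuine is happening, is that the inductive choice of edges after step $k$ can always be made without creating a crossing while avoiding $i$ as an endpoint. This reduces to the fact that in a non-crossing partition one can always pick representatives of merging blocks so that the connecting segment does not cross previously chosen edges of the same non-crossing forest; this is exactly the flexibility one uses to realise any maximal chain of non-crossing partitions by a non-crossing tree, now constrained harmlessly by the requirement that the vertex $i$ not be picked.
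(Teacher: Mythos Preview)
Your argument is correct and follows essentially the same route as the paper's proof: both identify the minimal rank $k$ at which $i$ ceases to be a singleton, build the non-crossing forest for $v_1,\dots,v_{k-1}$ arbitrarily, insert $e_k=\{i,j\}$ at step $k$, and then continue while forbidding $i$ as an endpoint, using that the block of $i$ always contains $j$. Your final paragraph is slightly more explicit than the paper about the one point that requires care---that the avoidance of $i$ does not obstruct the non-crossing extension---but the justification is the same.
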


\begin{lem}
\label{lem: shortcut for NCP6}
When $n=6$, let $F$ be a non-universal turning face in $X$, and let $C$ be any chamber in $X$. Then there exists a pair $v,w$ of universal vertices in $X$, which are opposite in $B$, such that $F,v,w$ are contained in an apartment in $X$, and $C,v,w$ are contained in a (possibly different) apartment in $X$.
\begin{proof}
According to Lemma~\ref{lem: turning points for NCP6}, every non-universal turning face contains a dominant vertex; let $u$ denote the dominant vertex of $F$. Our strategy here is to find consecutive $i, j\in U_n$ such that $j \in C_i \cap u_i$. Then Lemma~\ref{lem: leaf of tree} will give a pair $v,w$ of universal vertices in $X$, which are opposite in $B$, such that there exists an apartment in $X$ containing $C,v,w$, and another apartment in $X$ containing $u,v,w$. Since $u$ is dominant for $F$, this last apartment contains $F,v,w$.

The following table lists all possibilities for $u_i$ (up to duality), depending on $i$ and the dominant vertex $u$ of $F$ (listed as in Lemma~\ref{lem: turning points for NCP6}).
\[\begin{array}{c|cccccc}
\textrm{Dominant vertex $u$}& i=1& 2& 3& 4& 5& 6 \\
 \hline \{ \{2,4\}, \{1\},\{3\}, \{5\}, \{6\}\} & U_6 & \{2,4\} & U_6 & \{2,4\} & U_6 & U_6\\
\{ \{1,2\}, \{3,4\}, \{5\}, \{6\}\} & \{1,2\} & \{1,2\} & \{3,4\} & \{3,4\} & U_6 & U_6\\
\{ \{1,2\}, \{4,5\}, \{3\}, \{6\}\} & \{1,2\} & \{1,2\} & U_6 & \{4,5\} & \{4,5\} & U_6\\
\end{array}\]

Now let us consider $C_i$. If $5 \in C_6$ or $1 \in C_6$ then our table tells us that we are done. Suppose that neither of these two occurs. If $4 \in C_6$ then $4 \in C_5$ and again we are done. Similarly if $2 \in C_6$ then $2 \in C_1$.

We are left with the case $C_6 = \{3,6\}$.
Here $6 \in C_5$ or $4 \in C_5$, which deals with the first two possibilities for $u_5$. The third one requires the observation that if $4 \not\in C_5$ then $5 \in C_4$.
\end{proof}
\end{lem}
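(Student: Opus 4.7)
The plan is to reduce the lemma to a combinatorial search: find cyclically consecutive $i, j \in U_6$ with $j \in C_i \cap u_i$, where $u$ is the dominant universal vertex of $F$ supplied by Lemma~\ref{lem: turning points for NCP6}. Given such a pair, Lemma~\ref{lem: leaf of tree} applied to $C$ produces an apartment of $X$ containing $C$, the universal vertex $v$ with non-trivial block $\{i, j\}$, and the opposite universal vertex $w = \{\{i\}, \llb 1, 6 \rrb \smallsetminus \{i\}\}$. Applied to any chamber $C''$ of $X$ passing through $u$ whose rank-$1$ vertex is $\{\{i, j\}, \ldots\}$ (such a chamber exists because $\{i, j\} \subseteq u_i$), the lemma produces a second apartment of $X$ containing $C''$, $v$, and $w$; dominance of $u$ then forces this second apartment to contain all of $F$.

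First I would tabulate $u_i$ for $i = 1, \ldots, 6$ in each of the three representative cases of Lemma~\ref{lem: turning points for NCP6} (case~4 being dual to case~1); this yields the table displayed in the paper. The features driving the rest of the argument are that $u_6 = U_6$ in every case, that $4 \in u_5$ and $2 \in u_1$ in every case, and that $5 \in u_4$ in case~3.

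The combinatorial search begins with $i = 6$. If $5 \in C_6$ or $1 \in C_6$, the pair $(6, 5)$ or $(6, 1)$ works. Otherwise, a short non-crossing argument shows that $4 \in C_6$ implies $4 \in C_5$: whenever $4$ and $6$ lie in a common block $B$ at some rank, non-crossing forbids $5$ from occupying any non-trivial block distinct from the evolution of $B$, and a direct case analysis of when $5$ first becomes non-trivial yields $4 \in C_5$. Symmetrically $2 \in C_6$ implies $2 \in C_1$. Combined with the table, both close the search. The only remaining possibility is $C_6 = \{3, 6\}$.

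The hard part is this leftover case. Here $\{3, 6\}$ appears as a block in some vertex of $C$ and, by non-crossing, the chord $[3, 6]$ separates $\{1, 2\}$ from $\{4, 5\}$ in $U_6$. The first non-trivial block containing $5$ in the chain defining $C$ is therefore either contained in $\{4, 5\}$, so $4 \in C_5$, or is a growth of the $\{3, 6\}$-block, so $6 \in C_5$. The first alternative closes every case since $4 \in u_5$ throughout; the second closes cases~1 and~2, where $u_5 = U_6$. For case~3 (where $u_5 = \{4, 5\}$ does not contain $6$) I would instead target the pair $(i, j) = (4, 5)$, arguing that the constraint that $v_4$ has exactly two blocks, combined with the non-crossing condition between any growth of the $\{3, 6\}$-block and the block containing $4$, forces $4$ either to remain isolated throughout $C$ (so $C_4 = U_6 \ni 5$) or to share a block with $5$ (so $5 \in C_4$). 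Since $u_4 = \{4, 5\} \ni 5$, this closes the argument.
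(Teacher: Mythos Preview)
Your approach is exactly that of the paper: reduce to finding consecutive $i,j$ with $j \in C_i \cap u_i$, then invoke Lemma~\ref{lem: leaf of tree} twice, and use dominance of $u$ to pull $F$ into the second apartment. Your table and the $C_6$-driven case analysis match the paper line for line, and your extra justifications (e.g.\ why $4\in C_6$ forces $4\in C_5$, and why $4\notin C_5$ forces $5\in C_4$ in the residual case) are correct and more explicit than what the paper writes.

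There is one slip. The chamber $C''$ you build ``passing through $u$ whose rank-$1$ vertex is $\{\{i,j\},\ldots\}$'' need not exist: that rank-$1$ vertex lies below $u$ only when $i$ and $j$ share a block of $u$, i.e.\ when $u_i$ is an actual block of $u$. But $u_i = U_6$ precisely records that $i$ is a \emph{singleton} in $u$, and this is the situation you use most often (every time you take $i=6$, since $u_6=U_6$ in every row; also throughout case~1, where $u$ itself has rank~$1$ and is therefore the only possible rank-$1$ vertex of any chamber through it). In those cases your $C''$ simply does not exist. The fix is immediate and keeps the argument intact: when $i$ is a singleton in $u$, build $C''$ by placing $u$ at its rank and letting the next vertex above $u$ merge $\{i\}$ into the block containing $j$ (possible and non-crossing since $i,j$ are consecutive); this guarantees $j\in C''_i$ as required for Lemma~\ref{lem: leaf of tree}.
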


\begin{thm}
\label{thm: NCP6 is CAT(1)}
The non-crossing partition complex $NCP_6$ is CAT(0).
\begin{proof}
Assume there is an unshrinkable short loop $l:S^1 \ra X$ of length $L < 2\pi$, where $X$ is the diagonal link of $\vert NCP_6\vert$. Then by Lemma~\ref{lem: cardinality of T} this loop has a turning point with image $x$ in $X$. Let us reparametrise $l$ so that the domain of $l$ is $[0, L]$ and $x=l(0)=l(L)$. Consider $y=l(L/2)$. By Lemma~\ref{lem: shortcut for NCP6}, there exists a pair $v,w$ of universal vertices in $X$, which are opposite in $B$, such that both $\{x,v,w\}$ and $\{y,v,w\}$ lie in apartments in $X$. Hence we know that
\[d(x,v)+d(x,w)=d(y,v)+d(y,w)=\pi.\]
So at least one element of $\{v,w\}$, say $v$, satisfies \[d(x,v)+d(v,y) \leq \pi.\]
Let $\alpha_1 = l|_{[0,L/2]}$ and $\alpha_2 = l|_{[L/2,L]}^{-1}$ be the two subpaths of $l$ from $x$ to $y$. Let $\alpha_3 \colon [0,d(x,v)+d(v,y)] \ra X$ denote the concatenation of the geodesic segments $[x,v]$ and $[v,y]$.

Consider the loop $\alpha_3^{-1} \circ \alpha_1$. Since it is short and passes through the universal vertex $v$, by Lemma~\ref{lem: shrinking at univ} it can be shrunk. Similarly, the loop $\alpha_3^{-1} \circ \alpha_2$ can be shrunk. Now we can apply Theorem~\ref{thm: bowditch shortcut}, which tells us that the loop $l=\alpha_2^{-1} \circ \alpha_1$ can be shrunk. Hence by Theorem~\ref{thm: brady mccammond large}, the diagonal link in $NCP_6$ is CAT(1), and the result follows.
\end{proof}
\end{thm}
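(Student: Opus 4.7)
The plan is to invoke Theorem~\ref{thm: brady mccammond large} to reduce matters to showing that the diagonal link $X$ of $NCP_6$ contains no unshrinkable short loop, since the cases $3 \leqslant k \leqslant 5$ are already handled by Remark~\ref{rmk: ncp 3 and 4} and Corollary~\ref{cor: NCP5 is CAT(1)}. I would argue by contradiction, assuming there exists an unshrinkable locally geodesic loop $l \colon \R/L\Z \to X$ with $L < 2\pi$. By Lemma~\ref{lem: cardinality of T} such a loop has at least three turning points; fix one and reparametrise so that it sits at $0$. Write $x = l(0)$ and let $y = l(L/2)$ be the antipodal point on the loop.

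The next step is to produce a universal vertex through which a shortcut can be taken. If the turning face at $x$ happens to be universal, then $l$ passes through a universal point and Lemma~\ref{lem: shrinking at univ} immediately yields a contradiction. Otherwise, Lemma~\ref{lem: turning points for NCP6} classifies the turning face $F$ at $x$ (each such $F$ has a distinguished dominant universal vertex), and one can pick any chamber $C$ of $X$ containing $y$. Lemma~\ref{lem: shortcut for NCP6} then supplies a pair of universal vertices $v,w$ of $X$ that are opposite in $B$ and such that $\{F,v,w\}$ lies in an apartment of $X$ and $\{C,v,w\}$ lies in (possibly another) apartment of $X$.

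Because $v$ and $w$ are antipodal in $B$ and both $x$ and $y$ share apartments with them in $X$, we get
\[ d(x,v) + d(x,w) = \pi = d(y,v) + d(y,w), \]
so by pigeonhole one of $v,w$, say $v$, satisfies $d(x,v) + d(v,y) \leqslant \pi$. Writing $\alpha_1 = l|_{[0,L/2]}$ and $\alpha_2 = l|_{[L/2,L]}^{-1}$ for the two halves of $l$ and $\alpha_3$ for the concatenation of the $B$-geodesics $[x,v] \cup [v,y]$ (which lie in $X$ by Lemma~\ref{lem: starring at univ}), each of the loops $\alpha_3^{-1} \circ \alpha_i$ for $i=1,2$ has length at most $L/2 + \pi < 2\pi$, is based at the universal vertex $v$ (after rebasing), and is therefore short and shrinkable by Lemma~\ref{lem: shrinking at univ}. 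Bowditch's shortcut theorem (Theorem~\ref{thm: bowditch shortcut}) now forces $l = \alpha_2^{-1} \circ \alpha_1$ to be shrinkable as well, the desired contradiction.

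The main obstacle in this argument has already been absorbed into Lemma~\ref{lem: shortcut for NCP6}: producing simultaneously an apartment containing $F$ together with $v,w$ and another apartment containing $C$ together with $v,w$ required the careful classification of non-universal turning faces in Lemma~\ref{lem: turning points for NCP6} and the leaf-of-tree construction of Lemma~\ref{lem: leaf of tree}. Once that combinatorial input is in hand, the remaining work for Theorem~\ref{thm: NCP6 is CAT(1)} is the short geometric packaging above.
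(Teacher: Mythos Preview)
Your proposal is correct and follows essentially the same route as the paper's proof: find a turning point, use Lemma~\ref{lem: shortcut for NCP6} to produce an opposite pair of universal vertices $v,w$ sharing apartments with both $x$ and $y$, pick the one giving $d(x,v)+d(v,y)\leqslant\pi$, and apply Lemma~\ref{lem: shrinking at univ} together with Bowditch's shortcut theorem. Your extra case split (turning face universal vs.\ not) and your explicit invocation of Lemma~\ref{lem: starring at univ} to place the segments $[x,v]$, $[v,y]$ inside $X$ are harmless clarifications; one small slip is calling the dominant vertex of $F$ ``universal'' --- it is the dominant vertex of Lemma~\ref{lem: turning points for NCP6}, not a universal point in the sense of the definition.
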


Now we apply Proposition~\ref{prop: cat0 implies braids} to conclude the following.

\begin{cor}
\label{cor: braid groups are CAT0}
For every $n \leq 6$, the $n$-strand braid group is CAT(0).
\end{cor}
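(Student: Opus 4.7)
The plan is to deduce this corollary almost immediately from Proposition~\ref{prop: cat0 implies braids}, which states that if $NCP_m$ is CAT(0) for every $m \leqslant n$, then the $n$-strand braid group is CAT(0). So the entire task reduces to verifying that $NCP_m$ is CAT(0) for each $m \leqslant 6$.

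To obtain this, I would invoke Theorem~\ref{thm: brady mccammond large}: in order to conclude that $NCP_n$ is CAT(0), it suffices to check that for every $3 \leqslant k \leqslant n$ the diagonal link of $NCP_k$ contains no unshrinkable short loop. The needed absence of such loops is already in hand at each relevant index. Remark~\ref{rmk: ncp 3 and 4} settles the cases $k=3$ (where the diagonal link is just three isolated points) and $k=4$ (where it is a subgraph of the Fano incidence graph whose edges of length $\pi/3$ combined with combinatorial girth at least $6$ force a geometric girth of at least $2\pi$). Corollary~\ref{cor: NCP5 is CAT(1)} handles $k=5$, and Theorem~\ref{thm: NCP6 is CAT(1)} handles $k=6$. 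Running Theorem~\ref{thm: brady mccammond large} for each $n \in \{3,4,5,6\}$ separately then yields that $NCP_m$ is CAT(0) for every $m \leqslant 6$ (the cases $m \leqslant 2$ being vacuous, since $NCP_2$ is a single edge).

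At this point Proposition~\ref{prop: cat0 implies braids} concludes the argument: for each $n \leqslant 6$, all of $NCP_2, \dots, NCP_n$ are CAT(0), so the $n$-strand braid group is CAT(0). There is essentially no obstacle to overcome here, as all of the genuine work has already been done; the corollary is a bookkeeping exercise. The one subtlety worth flagging is the distinction between the CAT(1) property of the \emph{diagonal link} (what the key theorems actually deliver, phrased through the absence of unshrinkable short loops) and the CAT(0) property of the full orthoscheme complex $NCP_n$ used as hypothesis in Proposition~\ref{prop: cat0 implies braids}; Theorem~\ref{thm: brady mccammond large} is precisely the bridge between these two formulations.
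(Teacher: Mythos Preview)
Your proposal is correct and follows essentially the same route as the paper: the paper simply states that the corollary follows by applying Proposition~\ref{prop: cat0 implies braids}, the hypotheses of which are already secured by Remark~\ref{rmk: ncp 3 and 4}, Corollary~\ref{cor: NCP5 is CAT(1)}, and Theorem~\ref{thm: NCP6 is CAT(1)} (each of which invokes Theorem~\ref{thm: brady mccammond large} to pass from ``no unshrinkable short loop in the diagonal link'' to ``$NCP_n$ is CAT(0)''). Your write-up merely makes this bookkeeping explicit.
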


\section{The orthoscheme complex of a modular complemented lattice is CAT(0)}

We now prove that the orthoscheme complex of a bounded graded modular complemented lattice is CAT(0), thus giving a partial confirmation of~\cite[Conjecture~6.10]{brady_mccammond}. The conjecture states that the result should be true without assuming that the lattice is complemented, however we need this extra assumption to embed the diagonal link of the orthoscheme complex into a spherical building (or a CAT(1) graph in a pathological case).

\begin{dfn}[Modular lattice]
\label{dfn: modular}
 A lattice $P$ is said to be \emph{modular} if $$\forall x,y,z \in P,\; x \geq z \Longrightarrow x \wedge (y \vee z) = (x \wedge y) \vee z.$$
\end{dfn}

Suppose $P$ is a modular lattice which is linearly embedded in some $S(V)$. Then it is easy to check that joins and meets in $P$ need to coincide with joins, respectively meets in $S(V)$. Hence there is no pair $x,y\in P$ which fails modularity in the sense of Definition~\ref{dfn: failing}.

\begin{dfn}[Complemented lattice]
\label{dfn: complemented}
 A bounded lattice $P$ is said to be \emph{complemented} if $$\forall x \in P, \exists y \in P,\; x \wedge y = 0 \mbox{ and } x \vee y = 1.$$
\end{dfn}

\begin{dfn}[Plane lattice]
\label{dfn: plane lattice}
 A lattice $P$ is said to be a \emph{plane lattice} if it is bounded, and graded of rank $3$.
\end{dfn}

\begin{thm}[Frink's embedding Theorem]
\label{thm:modular complemented in building}
Let $P$ be a bounded graded modular complemented lattice. Then $P$ is isomorphic to a direct product
\[P = \prod_{i=1}^r P_i\]
of bounded graded modular complemented lattices, such that for all $i \in \llb 1,r \rrb$, the lattice $P_i$ can be embedded as a subposet of a linear lattice (over a division algebra) or of a plane lattice, where the embedding preserves the meets and the joins.
\end{thm}
Note that it is absolutely crucial for us that the embeddings preserve joins and meets. It immediately implies that if the diagonal link $X$ in $\vert P \vert$ is linearly embedded, then no two vertices in $X$ fail modularity with respect to $P$.
\begin{proof}
According to~\cite[Theorem~279]{gratzer} and \cite[Lemma~99]{gratzer}, $P$ is isomorphic to a direct product $P = \prod_{i=1}^r P_i$ of simple bounded graded modular complemented lattices, where the fact that each $P_i$ is simple implies that $P_i$ cannot be embedded non-trivially as a subposet of a non-trivial product of lattices. According to~\cite[Corollary 439]{gratzer}, each $P_i$ is then embedded as a subposet of a product of linear lattices (over a division algebra) or of plane lattices, such that the joins and the meets are preserved. Since $P_i$ is simple, the product consists of only one non-trivial factor.
\end{proof}

For a more precise version of Frink's embedding Theorem, we refer the reader to~\cite{gratzer}.

\begin{cor}
The diagonal link in the orthoscheme complex of a plane lattice is CAT(1).
\end{cor}
\begin{proof}
The diagonal link of the orthoscheme complex of a plane lattice is a graph, since the orthoscheme complex of a rank 3 poset has dimension 3, and so the diagonal link has dimension 1. Moreover, any cycle in the graph is of even length, since the lattice is graded, there are no 2-cycles, since it is a simplicial complex, and no 4-cycles, since the poset is a lattice. Thus the girth of the diagonal link is at least 6. Also, each edge has the same length, namely $\frac \pi 3$. Thus all loops shorter than $2 \pi$ are shrinkable. The graph is also clearly locally CAT(1).
\end{proof}

\begin{thm}
\label{thm:modular are cat0}
The orthoscheme complex of a bounded graded modular complemented lattice is CAT(0).
\begin{proof}
Let $P$ be a bounded graded modular complemented lattice, and let $|P|$ be its orthoscheme complex. By Theorem~\ref{thm:modular complemented in building}, write $P = \prod_{i=1}^r P_i$. Since the orthoscheme complex of $|P|$ is the Euclidean product of the orthoscheme complexes of the posets $P_i$ (thanks to~\cite[Remark 5.3]{brady_mccammond}), we only need to show that each $|P_i|$ is CAT(0).

According to~\cite[Theorem~5.10]{brady_mccammond}, it is enough to check that the diagonal links of the full subcomplexes of $\vert P_i \vert$ spanned by intervals in $P_i$ are CAT(1). Since every such interval is itself a bounded graded modular complemented lattice by~\cite[Lemma~98]{gratzer}, and since this subcomplex is isometric to the orthoscheme complex of the interval, we only need to check this property for the diagonal link of $\vert P_i \vert$ itself (formally, we proceed by induction on the rank of the lattice).

Fix $i \in \llb 1,r\rrb$. The lattice $P_i$ is embedded as a subposet of a linear lattice $S(V)$ (over a division algebra) or of a plane lattice $L$, where the embedding preserves the meets and the joins.

In the first case, $P_i$ is linearly embedded in $S(V)$ in the sense of Definition~\ref{dfn: linearly embedded}. Since joins and meets coincide in $P_i$ and in $S(V)$, we deduce that $P_i$ has no pair of elements failing modularity in the sense of Definition~\ref{dfn: failing}. Let
\[X=LK(e_{01},|P_i|) \subseteq LK(e_{01},|S(V)|)=B\]
 be their diagonal links. Since $B$ is a spherical building, it is CAT(1). Assume that there is a short locally geodesic loop $l$ in $X$. By Lemma~\ref{lem: cardinality of T}, the loop $l$ has a turning point in $B$.
By Lemma~\ref{lem: modular do not turn}, the image of that turning point has neighbours which fail modularity, which is a contradiction. Hence $l$ cannot exist, and so according to~\cite[Theorem~5.10]{brady_mccammond} this implies that $|P_i|$ is CAT(0).

In the second case, the diagonal link of $|P_i|$ is a subgraph of the diagonal link of a plane lattice, which is a CAT(1) graph. Thus so is the diagonal link of $\vert P_i \vert$, and thus $\vert P_i \vert$ is CAT(0) as before.
\end{proof}
\end{thm}

\bibliographystyle{abbrv}
\bibliography{bibliography}

\begin{thebibliography}{10}

\bibitem{abramenko_brown}
P.~Abramenko and K.~S. Brown.
\newblock {\em Buildings}, volume 248 of {\em Graduate Texts in Mathematics}.
\newblock Springer, New York, 2008.
\newblock Theory and applications.

\bibitem{bell}
R.~W. Bell.
\newblock Three-dimensional {FC} {A}rtin groups are {CAT}(0).
\newblock {\em Geom. Dedicata}, 113:21--53, 2005.

\bibitem{bowditch}
B.~H. Bowditch.
\newblock Notes on locally {${\rm CAT}(1)$} spaces.
\newblock In {\em Geometric group theory ({C}olumbus, {OH}, 1992)}, volume~3 of
  {\em Ohio State Univ. Math. Res. Inst. Publ.}, pages 1--48. de Gruyter,
  Berlin, 1995.

\bibitem{brady_crisp}
N.~Brady and J.~Crisp.
\newblock Two-dimensional {A}rtin groups with {${\rm CAT}(0)$} dimension three.
\newblock In {\em Proceedings of the {C}onference on {G}eometric and
  {C}ombinatorial {G}roup {T}heory, {P}art {I} ({H}aifa, 2000)}, volume~94,
  pages 185--214, 2002.

\bibitem{brady}
T.~Brady.
\newblock A partial order on the symmetric group and new {$K(\pi,1)$}'s for the
  braid groups.
\newblock {\em Adv. Math.}, 161(1):20--40, 2001.

\bibitem{brady_mccammond}
T.~Brady and J.~McCammond.
\newblock Braids, posets and orthoschemes.
\newblock {\em Algebr. Geom. Topol.}, 10(4):2277--2314, 2010.

\bibitem{brady_mccammond2}
T.~Brady and J.~P. McCammond.
\newblock Three-generator {A}rtin groups of large type are biautomatic.
\newblock {\em J. Pure Appl. Algebra}, 151(1):1--9, 2000.

\bibitem{bridson_haefliger}
M.~R. Bridson and A.~Haefliger.
\newblock {\em Metric spaces of non-positive curvature}, volume 319 of {\em
  Grundlehren der Mathematischen Wissenschaften}.
\newblock Springer-Verlag, Berlin, 1999.

\bibitem{brown}
K.~S. Brown.
\newblock {\em Buildings}.
\newblock Springer-Verlag, New York, 1989.

\bibitem{charney}
R.~Charney.
\newblock An introduction to right-angled {A}rtin groups.
\newblock {\em Geom. Dedicata}, 125:141--158, 2007.

\bibitem{charney_davis_singular}
R.~Charney and M.~Davis.
\newblock Singular metrics of nonpositive curvature on branched covers of
  {R}iemannian manifolds.
\newblock {\em Amer. J. Math.}, 115(5):929--1009, 1993.

\bibitem{charney_davis}
R.~Charney and M.~W. Davis.
\newblock Finite {$K(\pi, 1)$}s for {A}rtin groups.
\newblock In {\em Prospects in topology ({P}rinceton, {NJ}, 1994)}, volume 138
  of {\em Ann. of Math. Stud.}, pages 110--124. Princeton Univ. Press,
  Princeton, NJ, 1995.

\bibitem{gratzer}
G.~Gr{\"a}tzer.
\newblock {\em Lattice theory: foundation}.
\newblock Birkh\"auser/Springer Basel AG, Basel, 2011.

\bibitem{gromov}
M.~Gromov.
\newblock Hyperbolic groups.
\newblock In {\em Essays in group theory}, volume~8 of {\em Math. Sci. Res.
  Inst. Publ.}, pages 75--263. Springer, New York, 1987.

\end{thebibliography}

\end{document}